\newlength\figureheight
\newlength\figurewidth
\newtheorem{theorem}{Theorem}[section]
\newtheorem{corollary}[theorem]{Corollary}
\newtheorem{example}{Example}[section]
\newtheorem{remark}{Remark}[section]
\title{On a spectral solver for highly oscillatory and non-smooth solutions of a class of linear fractional differential systems}
\author[1a]{Amin Faghih}
 \affil[1]{Department of Computer Science, KU Leuven, Leuven, Belgium}
\affil[a]{\texttt{amin.faghih@kuleuven.be}}
\begin{document}
\maketitle
\begin{abstract}
This study discusses a class of linear systems of fractional differential equations with non-constant coefficients, with a particular focus on problems exhibiting highly oscillatory and non-smooth behavior. We first establish the regularity properties of the solutions under specific conditions on the input data. A spectral Galerkin method based on M\"{u}ntz-Jacobi functions is developed that efficiently handle the non-smooth and highly oscillatory solutions. A key advantage of the proposed approach is the ability to compute the approximate solution via recurrence relations, avoiding the need to solve complex algebraic systems. Moreover, the method remains stable even at higher approximation degrees, effectively capturing highly oscillatory solutions with high accuracy. The well-known exponential accuracy is established in the $L^2$-norm, and some numerical examples are provided to demonstrate both the validity of the theoretical analysis and the efficiency of the proposed algorithm.
\end{abstract}
{\bf Keywords:}
Linear systems of fractional differential equations, highly oscillatory problems, M\"{u}ntz-Jacobi functions, Galerkin method.
\\
{\textbf{AMS subject classification:}} 34K37, 42B20, 65L60, 33C45.
	\renewcommand{\thefootnote}{\fnsymbol{footnote}}

\section{Introduction}
Fractional calculus (FC) has increasingly become an essential framework for modeling diverse phenomena in science and engineering. In particular, fractional differential equations (FDEs) have proven invaluable for describing processes that exhibit memory-dependent behavior. Unlike the integer-order differential equations, FDEs offer a great opportunity for modeling and simulating multi-physics phenomena. During the last few decades there has been an upsurge of intense research exploring various aspects of the theory and applications of FC and FDEs.   

Kilbas et al. \cite{Kilbas2006} present a detailed introduction to the theory and applications of FDEs. Diethelm \cite{39} offers a clear introduction to FC before focusing on Caputo-type FDEs. This book contains mathematically sound theory and relevant applications. Influential works in applications include Oustaloup \cite{Oustaloup1995} in control theory, Hilfer \cite{Hilfer2000} and Hermann \cite{Herrmann2011} in physics. Podlubny \cite{Podlubny1999} has become a standard reference, covering applications in mechanics, physics, and engineering. Magin \cite{Magin2006} explores bioengineering applications, while Atanackovic et al. \cite{Atanackovic2014} present applications in mechanics. Baleanu et al. \cite{Baleanu2012} present fractional models and numerical methods for solving FDEs. Tarasov \cite{Tarasov2010} discusses applications in statistical physics, condensed matter, and quantum dynamics. Other relevant contributions can be found in the books by Ortigueira \cite{Ortigueira2011}, Baleanu et al. \cite{Baleanu2012}, Petráš \cite{Petras2011}, and Sabatier et al. \cite{Sabatier2007}. Mainardi \cite{Mainardi2010} is a standard reference for the application of FC in viscoelasticity and wave motion, and Uchaikin \cite{Uchaikin2013} provides detailed motivation for FDEs in various branches of physics. Yang et al. \cite{Yang2016}, and Gholami Bahador et al. \cite{MR4512456,Bahador2023} successfully use fractional-order methods in image processing and image denoising. 

There are diverse physical phenomena whose individual components interact closely. Consequently, the mathematical modeling of such phenomena may lead to systems of fractional differential equations (SFDEs). Due to the non-local nature of fractional derivatives,
the computations involved in solving SFDEs are tedious and time consuming. Developing numerical methods for solving SFDEs has been a subject of intense research at present. Table \ref{table:methods} provides references for the different methods for solving SFDEs. 
\begin{table}[H]
\centering
\setlength{\tabcolsep}{9pt}
\begin{tabular}{l|p{6.7cm}|c|c}
\textbf{Type} & \textbf{Method} & \textbf{Linear} & \textbf{Non-linear} \\ \hline
\rule{0pt}{10pt}
\multirow{8}{*}{Single-order SFDEs} 
  & fractional Laguerre pseudo-spectral \cite{MR3254684} & \checkmark & \checkmark \\
  & Petrov-Galerkin \cite{MR4292976} & \checkmark &  \\
  & fractional-order Jacobi Tau \cite{Bhrawy2016} & \checkmark & \checkmark \\
  & hybrid non-polynomial collocation \cite{Ferras2020} & \checkmark &  \\
  & spline collocation \cite{MR4097975} & \checkmark &  \\
  & Jacobi-Gauss collocation \cite{MR4085162} & \checkmark & \checkmark \\
  & central part interpolation \cite{Lillemae2025} & \checkmark &  \\
 & fractional Hamiltonian boundary value \cite{BrugnanoGurioliIavernaro2025,BrugnanoGurioliIavernaroVikerpuur2025,BrugnanoGurioliIavernaroVikerpuur2025FDE} & \checkmark & \checkmark \\[12pt]

\multirow{7}{*}{Multi-order SFDEs\tablefootnote{Multi-order SFDEs refer to systems of fractional differential equations where each equation involves a fractional derivative of a distinct order.}}
  & Legendre wavelet \cite{Chen2015} & \checkmark & \checkmark \\
  & Bernoulli wavelet \cite{Wang2019} & \checkmark & \checkmark \\
  & Chebyshev collocation \cite{Khader2013,Khader2015} & \checkmark & \checkmark \\
  & fractional Jacobi collocation \cite{36} & \checkmark & \\
  & first kind Chebyshev Tau \cite{Atabakzadeh2013} & \checkmark & \checkmark \\
  & second kind Chebyshev Tau \cite{35} & \checkmark &  \\
  & fractional Galerkin \cite{MR4398496} &  & \checkmark \\
\end{tabular}
\caption{Summary of numerical procedures for solving SFDEs}
\label{table:methods}
\end{table}
Many existing approaches face notable drawbacks, listed below, that limit their efficiency and reliability:
\begin{itemize}
    \item \textbf{Mismatch between basis functions and solution behavior:} The singular nature of fractional derivative operators is often not adequately addressed when selecting basis functions. As a result, many methods rely on infinitely smooth functions (such as classical polynomials) that fail to capture the solution’s actual asymptotic and regularity characteristics. Achieving truly high-order accuracy requires aligning the smoothness of the basis with the inherent behavior of the exact solution, a condition often disregarded in current literature \cite{MR4097975,Atabakzadeh2013,Khader2013,Khader2015,Chen2015,Wang2019}.

    \item \textbf{High computational complexity:} In much of the literature on nonlinear SFDEs and on linear SFDEs with non-constant coefficients, collocation techniques are widely used: the residual is enforced to vanish at a set of collocation points \cite{MR4085162,36}. This strategy typically leads to a complex nonlinear algebraic system, in the linear case, to a dense linear system. Both scenarios can impose large computational costs and may degrade the accuracy of the numerical solution. These drawbacks become more pronounced tackling highly oscillatory problems, such as certain fractional-order mechanics models, real-world problems, or problems on long domains, where satisfactory accuracy is typically achieved only at very high approximation degrees.
\end{itemize}
The goal of this research is to address these challenges by implementing a Müntz-Jacobi Galerkin method for solving the following class of linear SFDEs with non-constant coefficients
\begin{equation}\label{eq1}
\begin{cases}
D_C^{\theta_j} v_j(t)=\sum\limits_{r=1}^{n}{p_{j,r}(t)~ v_r(t)}+p_{j,n+1}(t),\quad j=1,2,\ldots,n,\\
v_j^{(k)}(0)=v_{j,0}^{(k)},~k=0,1,\ldots, \lceil \theta_j \rceil-1,~ t\in \Lambda=[0,T],
\end{cases}
\end{equation}
where $\theta_{j} = \frac{\gamma_{j}}{q_{j}} \in \mathbb{Q}^{+}$ with co-prime integers $\gamma_{j} \ge 1$ and $q_{j} \ge 2$, $T \in \mathbb{R}^{+}$ (finite), and $\lceil \cdot \rceil$ stands for the ceiling function\footnote{$\lceil \theta_{j} \rceil$ denotes the smallest integer greater than or equal to $\theta_{j}$.}. The $p_{j,r}(t)$ are continuous functions on $\Lambda$, and $v_{j}(t)$ are the unknowns. $D^{\theta_{j}}_{C}$ is the Caputo fractional derivative of order $\theta_{j}$, defined by
\begin{equation*}
D^{\theta_j}_C (.)=I^{\lceil \theta_j \rceil-\theta_{j}}\partial_x^{\lceil \theta_j \rceil}(.),
\end{equation*}
in which $I^{\lceil \theta_j \rceil-\theta_{j}}$ denotes the Riemann-Liouville fractional integral operator of order $\lceil \theta_j \rceil-\theta_{j}$
 \cite{Kilbas2006,39,Podlubny1999}.
 
This study presents an in-depth analysis of the regularity properties of equation \eqref{eq1}, focusing particularly on cases where the problem exhibits highly oscillatory behavior. To tackle \eqref{eq1}, an efficient spectral method based on M\"{u}ntz-Jacobi functions, which were introduced by Shen and Wang \cite{r35}, is employed, as they reflect the asymptotic behavior of the solutions of \eqref{eq1}. Spectral techniques are well known for providing highly accurate approximations when applied to smooth problems. Nevertheless, they have certain limitations, such as the challenge of solving potentially ill-conditioned or ambiguous algebraic systems, and the significant loss of accuracy when dealing with problems that have non-smooth solutions. The approach adopted here overcomes these challenges by maintaining spectral accuracy for non-smooth and oscillatory solutions and allowing the approximate solution to be computed efficiently via recurrence relations, all without the necessity of solving complex algebraic systems.

The remainder of the paper is arranged as follows. Section \ref{secA} focuses on the regularity analysis of the solution to \eqref{eq1}. In Section \ref{sec1}, we introduce the M\"{u}ntz-Jacobi method for solving \eqref{eq1} under the assumptions of the regularity theorem. The unique solvability and computational complexity of the numerical solution are also addressed. In particular, Section \ref{sec4} presents a detailed investigation of the error estimates. Finally, the numerical experiments in Section \ref{sec5} demonstrate the effectiveness of our proposed algorithm and include a performance comparison with methods from the literature.
 \section{Regularity analysis}\label{secA}
For the existence and uniqueness of the solution to \eqref{eq1}, the continuity of the functions $p_{j,r}(t)$ is sufficient, and the solution inherits this continuity. This follows directly from Theorem 8.11 in \cite{39}. The following theorem provides a series representation of the solution of \eqref{eq1} around the origin and specifies the conditions under which the solution exhibits highly oscillatory behavior.
 \begin{theorem}[Regularity]\label{th202} Let $p_{j,r} : \mathbb{R} \to \mathbb{C}$ be periodic, highly oscillatory continuous functions of the form
\[
p_{j,r}(t) = f_{j,r}(t) \, e^{i \omega t}, \quad \omega \gg 1, \quad r=1,2,\ldots,n+1,
\]
which can be written as
\[
p_{j,r}(t) = \bar{p}_{j,r}\!\left(t^{1/q}\right),
\]
where $\bar{p}_{j,r}$ are analytic in a neighborhood of the origin. Then the series representation of the solution $v_j(t)$ of the equation \eqref{eq1}, in a neighborhood of the origin, is given by
\begin{equation}\label{eq1500}
v_j(t)=\psi_{j}(t)+\sum\limits_{\mu=\theta_{j}q}^{\infty}{\bar{v}_{j,\mu}~t^{\frac{\mu}{q}}}, \quad j=1,2,\ldots,n,
\end{equation}
in which $\psi_{j}(t)=\sum\limits_{k=0}^{\lceil \theta_j \rceil -1}{\frac{v_{j,0}^{(k)}}{k!}t^k}$, $q$ denotes the least common multiple of the denominators $q_{j}$, and the $\bar{v}_{j,\mu}$ are known coefficients.
\end{theorem}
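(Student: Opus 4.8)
The plan is to establish the series representation by combining the known Caputo–Volterra reformulation of \eqref{eq1} with a formal power-series ansatz, and then to justify convergence via a majorant (Cauchy-type) argument. First I would rewrite the system in integral form: applying $I^{\theta_j}$ to the $j$-th equation and using the standard identity $I^{\theta_j} D_C^{\theta_j} v_j = v_j - \psi_j$ (valid because $v_j$ inherits the required smoothness from Theorem 8.11 of \cite{39}), so that
\begin{equation}\label{eq:volterra}
v_j(t) = \psi_j(t) + \frac{1}{\Gamma(\theta_j)}\int_0^t (t-s)^{\theta_j - 1}\Bigl(\sum_{r=1}^n p_{j,r}(s)\,v_r(s) + p_{j,n+1}(s)\Bigr)\,ds, \qquad j=1,\dots,n.
\end{equation}
This turns the problem into a weakly singular Volterra system for which fixed-point iteration (Picard iteration) converges uniformly on $[0,\delta]$ for small $\delta$, so the solution is the limit of iterates that are each a finite sum of fractional powers.

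Next I would introduce the variable $\tau = t^{1/q}$ and work on the scale of powers $\tau^\mu = t^{\mu/q}$. The coefficient functions are, by hypothesis, analytic in $\tau$ near $0$, i.e. $\bar p_{j,r}(\tau) = \sum_{m\ge 0} c^{(j,r)}_m \tau^m$ with a common radius of convergence $\rho>0$. Plugging the ansatz $v_j(t) = \psi_j(t) + \sum_{\mu \ge \theta_j q} \bar v_{j,\mu}\,\tau^{\mu}$ into \eqref{eq:volterra} and using the elementary integral
\[
\frac{1}{\Gamma(\theta_j)}\int_0^t (t-s)^{\theta_j-1} s^{\beta/q}\,ds = \frac{\Gamma(\beta/q+1)}{\Gamma(\theta_j + \beta/q + 1)}\, t^{\theta_j + \beta/q}
\]
produces, after matching powers of $\tau$, an explicit triangular recursion expressing each $\bar v_{j,\mu}$ as a finite linear combination (with Gamma-quotient weights) of the $c^{(j,r)}_m$, the lower-order coefficients $\bar v_{r,\mu'}$ with $\mu'<\mu$, and the Taylor coefficients of $\psi_j$. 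The key structural point is that the exponent $\theta_j q$ is an integer (since $\theta_j = \gamma_j/q_j$ and $q$ is the l.c.m. of the $q_j$), so all powers appearing are of the form $\tau^\mu$ with $\mu\in\mathbb{Z}_{\ge 0}$, and the lowest new exponent contributed to $v_j$ is exactly $\theta_j q$; this is what fixes the starting index of the sum in \eqref{eq1500} and shows $\psi_j$ carries all the integer-in-$t$ part dictated by the initial data.

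The main obstacle, and the part requiring real care, is proving that the formally constructed series actually converges in a neighborhood of the origin rather than merely being an asymptotic expansion. For this I would set up a scalar majorant problem: replace every $|c^{(j,r)}_m|$ by $C\rho^{-m}$ for uniform constants $C,\rho$, bound the Gamma-quotient weights $\Gamma(\beta/q+1)/\Gamma(\theta_j+\beta/q+1)$ by $M\,\mu^{-\theta_{\min}}$ (using $\Gamma(x+a)/\Gamma(x+b)\sim x^{a-b}$ as $x\to\infty$, with $\theta_{\min}=\min_j\theta_j>0$), and show that the resulting scalar majorant series $V(\tau)=\sum V_\mu \tau^\mu$ satisfies a fixed-point equation of the form $V = \Psi + K\,\tau^{\theta_{\min}q}\,\Phi(V)$ with $\Phi$ analytic; the implicit function theorem (or a direct contraction estimate on the Banach algebra of power series with geometrically weighted coefficients) then gives a positive radius of convergence for $V$, hence for each $v_j$. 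A secondary subtlety is handling the factor $e^{i\omega t}=\sum_m (i\omega)^m t^m/m!$ inside $f_{j,r}(t)e^{i\omega t}$: this is entire in $t$ and therefore does not shrink $\rho$, but it does inflate $C$ by a factor like $e^{\omega\rho}$, so the radius of convergence shrinks as $\omega\to\infty$; this is precisely the analytic fingerprint of the "highly oscillatory" regime and should be recorded, since it motivates the Müntz–Jacobi discretization developed in the next section.
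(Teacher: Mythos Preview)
Your proposal is correct and follows essentially the same route as the paper: recast \eqref{eq1} as the Volterra system via $I^{\theta_j}D_C^{\theta_j}v_j=v_j-\psi_j$, insert the $t^{\mu/q}$ ansatz together with the analytic expansions of the $\bar p_{j,r}$, use $I^{\theta_j}(t^{\beta/q})=\tfrac{\Gamma(\beta/q+1)}{\Gamma(\theta_j+\beta/q+1)}t^{\theta_j+\beta/q}$, and read off a triangular recurrence for the $\bar v_{j,\mu}$ whose lowest new index is $\theta_j q$.

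The only substantive difference is the convergence step. The paper does not carry out a majorant argument but simply invokes Theorem~3.2 of \cite{36} for uniform and absolute convergence of the formal series; your Cauchy-majorant construction (geometric bounds on the $|c^{(j,r)}_m|$, the asymptotic $\Gamma(x+a)/\Gamma(x+b)\sim x^{a-b}$ for the kernel weights, and a contraction/implicit-function closing) is a self-contained replacement for that citation and in fact yields more, namely an explicit indication that the radius of convergence deteriorates like $e^{-c\omega}$ as $\omega\to\infty$. That observation is not in the paper's proof but is a genuinely useful complement to it, since it quantifies the oscillatory regime that motivates the M\"untz--Jacobi approach in Section~\ref{sec1}. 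One small presentational point: rather than starting directly from the truncated ansatz $\psi_j+\sum_{\mu\ge\theta_jq}$, it is cleaner (as the paper does) to begin with the full ansatz $\sum_{\mu\ge0}\bar v_{j,\mu}t^{\mu/q}$ and then \emph{derive} that the coefficients with $\mu<\theta_j q$ reproduce $\psi_j$, since this makes the identification of the low-order part a conclusion rather than an assumption.
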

\begin{proof}
First we consider the following representation of $v_j(t)$
\begin{equation}\label{eee3}
v_j(t)=\sum\limits_{\mu=0}^{\infty}{\bar{v}_{j,\mu}~t^{\frac{\mu}{q}}},\quad j=1,2,\ldots,n.
\end{equation}
The unknown coefficients $\{\bar{v}_{j,\mu}\}_{j=1}^{n}$ are chosen such that the representation \eqref{eq1500} converges and satisfies \eqref{eq1}. This is accomplished by employing the series expansion of $\bar{p}_{j,r}(t^{1/q})$, $j=1,2,\ldots,n$ around the origin, viz.
\begin{eqnarray}\label{e54-11}
p_{j,r}(t)&=&\bar{p}_{j,r}(t^{1/q})=\sum\limits_{\mu_{1},\mu_{2}=0}^{\infty}{f_{j,r,\mu_{1}}\frac{(i \omega)^{\mu_2}}{\mu_2!}~t^{\frac{\mu_{1}+\mu_{2}}{q}}},\quad r=1,2,...,n+1.
\end{eqnarray}
Considering uniform convergence and substituting \eqref{eee3} and \eqref{e54-11} into \eqref{eq1}, the coefficients 
\(\{\bar{v}_{j,\mu}\}_{j=1}^{n}\) satisfy the following equality
\begin{eqnarray*}
\nonumber\sum\limits_{\mu=0}^{\infty}{\bar{v}_{j,\mu}~t^{\frac{\mu}{q}}}=\psi_{j}(t)&+&\sum\limits_{r=1}^{n}~\sum\limits_{\mu_{1},\mu_{2}=0}^{\infty}~\sum\limits_{\mu=0}^{\infty}{f_{j,r,\mu_{1}}~\bar{v}_{r,\mu}~\frac{(i \omega)^{\mu_2}}{\mu_2!}~I^{\theta_{j}}\bigg(t^{\frac{\mu_{1}+\mu_{2}+\mu}
{q}}\bigg)}\\
&+&\sum\limits_{\mu_{1},\mu=0}^{\infty}{f_{j,n+1,\mu_{1}}~\frac{(i \omega)^{\mu}}{\mu!}~I^{\theta_{j}}\bigg(t^{\frac{\mu_{1}+\mu}
{q}}\bigg)},
\end{eqnarray*}
which can be rewritten as 
\begin{eqnarray}\label{eq148}
\nonumber\sum\limits_{\mu=0}^{\infty}{\bar{v}_{j,\mu}~t^{\frac{\mu}{q}}}=\psi_{j}(t)&+&\sum\limits_{r=1}^{n}~\sum\limits_{\mu_{1},\mu_{2}=0}^{\infty}~\sum\limits_{\mu=0}^{\infty}{f_{j,r,\mu_{1}}~\bar{v}_{r,\mu}~\xi_{1,j}~\frac{(i \omega)^{\mu_2}}{\mu_2!}~t^{\frac{\mu_{1}+\mu_{2}+\mu}
{q}+\theta_{j}}}\\
&+&\sum\limits_{\mu_{1},\mu=0}^{\infty}{f_{j,n+1,\mu_{1}}~\xi_{2,j}~\frac{(i \omega)^{\mu}}{\mu!}~t^{\frac{\mu_{1}+\mu}
{q}+\theta_{j}}},
\end{eqnarray}
in which 
\[\xi_{1,j}=\frac{\Gamma(\frac{\mu_{1}+\mu_{2}+\mu}{q}+1)}
{\Gamma(\frac{\mu_{1}+\mu_{2}+\mu}{q}+\theta_{j}+1)}, \quad \quad \xi_{2,j}=\frac{\Gamma(\frac{\mu_{1}+\mu}{q}+1)}
{\Gamma(\frac{\mu_{1}+\mu}{q}+\theta_{j}+1)}.\]
Substituting \(\mu = \mu - \mu_1 - \mu_2 - \theta_j q\) in the first series, and \(\mu = \mu - \mu_1 - \theta_j q\) in the second series on the right-hand side of \eqref{eq148}, we obtain
\begin{eqnarray}\label{eq149}
\nonumber\sum\limits_{\mu=0}^{\infty}{\bar{v}_{j,\mu}~t^{\frac{\mu}{q}}}=\psi_{j}(t)&+&\sum\limits_{r=1}^{n}~\sum\limits_{\mu_{1},\mu_{2}=0}^{\infty}~\sum\limits_{\mu=\mu_1+\mu_2+\theta_{j}q}^{\infty}{f_{j,r,\mu_{1}}~\bar{v}_{r,\mu-\mu_1-\mu_2-\theta_{j}q}~\bar{\xi}_{1,j}~\frac{(i \omega)^{\mu_2}}{\mu_2!}~t^{\frac{\mu}
{q}}}\\
&+&\sum\limits_{\substack{\mu_{1}=0 \\ \mu=\mu_{1}+\theta_{j}q}}^{\infty}{f_{j,n+1,\mu_{1}}~\bar{\xi}_{2,j}~\frac{(i \omega)^{\mu-\mu_{1}-\theta_{j}q}}{(\mu-\mu_{1}-\theta_{j}q)!}~t^{\frac{\mu}
{q}}},
\end{eqnarray}
where
\[\bar{\xi}_{1,j}=\frac{\Gamma(\frac{\mu}{q}-\theta_{j}+1)}
{\Gamma(\frac{\mu}{q}+1)}, \quad \quad \bar{\xi}_{2,j}=\frac{\Gamma(\frac{\mu}{q}-\theta_{j}+1)}
{\Gamma(\frac{\mu}{q}+1)}.
\]
In this step, we compare the coefficients of \(t^{\frac{\mu}{q}}\) on both sides of \eqref{eq149} to determine the unknown coefficients \(\bar{v}_{j,\mu}\). For \(\mu < \theta_j q\), it is straightforward to observe that
\begin{equation*}
\bar{v}_{j,\mu}=\left\{\begin{array}{l}
\dfrac{v_{j,0}^{(\frac{\mu}{q})}}{(\frac{\mu}{q})!},\quad \quad \mu=0,q, ..., (\lceil \theta_j \rceil-1)q,\\
0,\hspace{2 cm}\text{else},
\end{array}\right.
\end{equation*}
and for $\mu \geq \theta_{j} q$, we find
\begin{align}\label{coefficients}
\bar{v}_{j,\mu}=\sum\limits_{r=1}^{n}~\sum\limits_{\mu_{1},\mu_{2}=0}^{\infty}{f_{j,r,\mu_{1}}~\bar{v}_{r,\mu-\mu_1-\mu_2-\theta_{j}q}~\bar{\xi}_{1,j}~\frac{(i \omega)^{\mu_2}}{\mu_2!}}+\sum\limits_{\mu_{1}=0}^{\infty}{f_{j,n+1,\mu_{1}}~\bar{\xi}_{2,j}~\frac{(i \omega)^{\mu-\mu_{1}-\theta_{j}q}}{(\mu-\mu_{1}-\theta_{j}q)!}}.
\end{align}
We see that this is a recurrence relation: It states that its left-hand side, viz. $\bar{v}_{j,\mu}$, only depends on coefficients $\bar{v}_{j,\mu-\mu_1-\mu_2-\theta_{j}q}$ where
\[
0 \leq \mu-\mu_1-\mu_2-\theta_{j}q<\mu.
\]
This shows that the series representation \eqref{eq1500} provides a unique solution to the main problem \eqref{eq1}. Moreover, its uniform and absolute convergence in a neighborhood of the origin is a consequence of Theorem 3.2 in \cite{36}.
\end{proof}
Theorem \ref{th202} leads to the following series expansion of $v_{j}(t)$.
\begin{corollary}\label{Cons}
Under the assumptions of Theorem \ref{th202}, we have
    \begin{equation}\label{seriesw}
      v_j(t)= \psi_{j}(t)+\sum\limits_{\mu=\theta_{j}q}^{\infty}{\bold{G}_{\mu-\theta_{j}q}(i\omega)~t^{\frac{\mu}{q}}}, \quad j=1,2,\ldots,n,
    \end{equation}
    where $\bold{G}_{j,\mu-\theta_{j}q}(i\omega)=\sum\limits_{k=0}^{\mu-\theta_{j}q}a_{j,k}\frac{(iw)^k}{k!}$, and the coefficients $a_{j,k}$ depend on the data \(\{f_{j,r,\mu_1}\}\), the parameters \(\{\bar\xi_{\ell,j}\}_{\ell=1}^{2}\), and the known coefficients \(\{\bar v_{r,\nu}\}\), $\nu < \mu$.
\end{corollary}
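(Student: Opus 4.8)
The plan is to read off \eqref{seriesw} directly from the recurrence \eqref{coefficients} obtained in the proof of Theorem \ref{th202}. Concretely, I would prove by strong induction on $\mu$ that each coefficient $\bar v_{j,\mu}$ is a polynomial in $i\omega$ of degree at most $\mu-\theta_j q$, and moreover admits the normalized representation $\bar v_{j,\mu}=\sum_{k=0}^{\mu-\theta_j q}a_{j,k}\,\frac{(i\omega)^k}{k!}$. Granting this, it suffices to set $\mathbf G_{j,\mu-\theta_j q}(i\omega):=\bar v_{j,\mu}$ and substitute into \eqref{eq1500}: the terms with $\mu<\theta_j q$ are exactly the polynomial part $\psi_j(t)$, and the remaining terms produce the series in \eqref{seriesw}. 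No separate convergence argument is needed, since the resulting series is literally \eqref{eq1500}, whose uniform and absolute convergence near the origin was already established in Theorem \ref{th202}.

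For the base of the induction, recall that the proof of Theorem \ref{th202} shows $\bar v_{j,\mu}\in\{v_{j,0}^{(\mu/q)}/(\mu/q)!,\,0\}$ for $\mu<\theta_j q$, hence a constant (degree $0$) and part of $\psi_j$; and for $\mu=\theta_j q$ the nonnegativity of the shifted index $\mu-\mu_1-\mu_2-\theta_j q$ (resp. $\mu-\mu_1-\theta_j q$) forces $\mu_1=\mu_2=0$ (resp. $\mu_1=0$) in \eqref{coefficients}, so $\bar v_{j,\theta_j q}=\sum_{r=1}^n f_{j,r,0}\,\bar v_{r,0}\,\bar\xi_{1,j}+f_{j,n+1,0}\,\bar\xi_{2,j}$, again a constant, matching $\mu-\theta_j q=0$.

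For the inductive step I would fix $\mu>\theta_j q$, assume the claim for all smaller indices and all components, and inspect the two sums on the right of \eqref{coefficients}. In the first sum the lower coefficient appearing is $\bar v_{r,\nu}$ with $\nu=\mu-\mu_1-\mu_2-\theta_j q$; since it contributes only when $\nu\ge 0$, the sum over $(\mu_1,\mu_2)$ is effectively finite, and by the induction hypothesis $\deg_{i\omega}\bar v_{r,\nu}\le\max(\nu-\theta_r q,\,0)$. Multiplying by the explicit factor $(i\omega)^{\mu_2}/\mu_2!$ and distinguishing the two cases $\nu\ge\theta_r q$ and $\nu<\theta_r q$, one checks in a line that the degree of each summand is at most $\mu-\theta_j q$ (using $\mu_1,\theta_r q\ge 0$). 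The second sum contributes the term with explicit factor $(i\omega)^{\mu-\mu_1-\theta_j q}/(\mu-\mu_1-\theta_j q)!$, which is nonzero only for $\mu_1\le\mu-\theta_j q$ and then has degree exactly $\mu-\mu_1-\theta_j q\le\mu-\theta_j q$. A finite sum of polynomials of degree $\le\mu-\theta_j q$ is again such a polynomial; rewriting it in the normalized form $\sum_{k=0}^{\mu-\theta_j q}a_{j,k}(i\omega)^k/k!$ and reading off the coefficients from \eqref{coefficients} shows that the $a_{j,k}$ are built solely from $\{f_{j,r,\mu_1}\}$, $\{\bar\xi_{\ell,j}\}_{\ell=1}^2$ and the lower coefficients $\{\bar v_{r,\nu}\}_{\nu<\mu}$, completing the induction and hence the corollary.

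The only real obstacle is the degree bookkeeping: one must keep track of how the explicit powers of $i\omega$ in \eqref{coefficients} combine with the inductively bounded degrees of the $\bar v_{r,\nu}$, and observe that the nominally infinite inner sums truncate because $\bar v_{r,\nu}$ vanishes for $\nu<0$ and $1/(\mu-\mu_1-\theta_j q)!$ vanishes (as a reciprocal Gamma value) once $\mu-\mu_1-\theta_j q<0$. Everything else — existence, uniqueness and convergence of the expansion — has already been supplied by Theorem \ref{th202}.
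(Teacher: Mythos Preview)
Your proposal is correct and follows the same approach as the paper: the paper's own proof is the single sentence ``This follows directly from Theorem~\ref{th202} upon substituting \eqref{coefficients} into \eqref{eq1500},'' and your strong induction merely supplies the degree bookkeeping that makes this substitution precise. If anything, your argument is more careful than the paper's, since you verify explicitly that the powers of $i\omega$ appearing in $\bar v_{j,\mu}$ cannot exceed $\mu-\theta_j q$.
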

\begin{proof}
This follows directly from Theorem~\ref{th202} upon substituting \eqref{coefficients} into \eqref{eq1500}.
\end{proof}
To clarify the structure of the series expansions \eqref{eq1500} and \eqref{seriesw}, we give the following example.
\begin{example}
    Consider the system \eqref{eq1} with fractional orders $\theta_{1}=\frac{1}{2}$ and $\theta_{2}=\frac{3}{2}$, together with the initial data $v_{1,0}^{(0)}$, $v_{2,0}^{(0)}$, and $v_{2,0}^{(1)}$. The given force functions are assumed to be continuous and highly oscillatory of the form
    \[
p_{j,r}(t) = f_{j,r}(t) \, e^{i \omega t}, \quad \omega \gg 1, \quad \quad j=1,2, \quad r=1,2,3.
\]
In this setting $q=2$ and the solutions $v_{1}(t)$ and $v_{2}(t)$ admit the following series representations
\begin{equation}\label{Ex0}
v_1(t)=\psi_{1}(t)+\sum\limits_{\mu=1}^{\infty}{\bar{v}_{1,\mu}~t^{\frac{\mu}{2}}},\quad v_2(t)=\psi_{2}(t)+\sum\limits_{\mu=3}^{\infty}{\bar{v}_{2,\mu}~t^{\frac{\mu}{2}}},
\end{equation}
where $\psi_{1}(t)=v_{1,0}^{(0)}$ and $\psi_{2}(t)=v_{2,0}^{(0)}+v_{2,0}^{(1)}t$. Following \eqref{coefficients}, the coefficients $\bar{v}_{1,\mu}$ for $\mu \geq 1$, and $\bar{v}_{2,\mu}$ for $\mu \geq 3$, satisfy recursive relations of the form
\begin{eqnarray*}
\bar{v}_{1,\mu}&=&\sum\limits_{r=1}^{2}~\sum\limits_{\mu_{1},\mu_{2}=0}^{\infty}{f_{1,r,\mu_{1}}~\bar{v}_{r,\mu-\mu_1-\mu_2-1}~\bar{\xi}_{1,1}~\frac{(i \omega)^{\mu_2}}{\mu_2!}}+\sum\limits_{\mu_{1}=0}^{\infty}{f_{1,3,\mu_{1}}~\bar{\xi}_{2,1}~\frac{(i \omega)^{\mu-\mu_{1}-1}}{(\mu-\mu_{1}-1)!}},\\
\bar{v}_{2,\mu}&=&\sum\limits_{r=1}^{2}~\sum\limits_{\mu_{1},\mu_{2}=0}^{\infty}{f_{2,r,\mu_{1}}~\bar{v}_{r,\mu-\mu_1-\mu_2-3}~\bar{\xi}_{1,2}~\frac{(i \omega)^{\mu_2}}{\mu_2!}}+\sum\limits_{\mu_{1}=0}^{\infty}{f_{2,3,\mu_{1}}~\bar{\xi}_{2,2}~\frac{(i \omega)^{\mu-\mu_{1}-3}}{(\mu-\mu_{1}-3)!}}.
\end{eqnarray*}
Substituting $\bar{v}_{1,\mu}$ and $\bar{v}_{2,\mu}$ into \eqref{Ex0} yields the representation \eqref{seriesw}.
\begin{eqnarray*}
     v_1(t)&=& \psi_{1}(t)+\bold{G}_{1,0}(i\omega)~t^{\frac{1}{2}}+\bold{G}_{1,1}(i\omega)~t+{\bold{G}_{1,2}(i\omega)~t^{\frac{3}{2}}}+\ldots,\\
     v_2(t)&=& \psi_{2}(t)+\bold{G}_{2,0}(i\omega)~t^{\frac{3}{2}}+\bold{G}_{2,1}(i\omega)~t^{2}+{\bold{G}_{2,2}(i\omega)~t^{\frac{5}{2}}}+\ldots,
\end{eqnarray*}
where 
\[
\mathbf{G}_{1,\mu-1}(i\omega)=\sum_{k=0}^{\mu-1} a_{1,k}\,\frac{(i\omega)^{k}}{k!},
\qquad
\mathbf{G}_{2,\mu-3}(i\omega)=\sum_{k=0}^{\mu-3} a_{2,k}\,\frac{(i\omega)^{k}}{k!}.
\]
\end{example}
From Corollary \ref{Cons} it follows that the solutions of \eqref{eq1} not only inherit the reduced regularity of the functions $p_{j,r}(t)$ at the origin (the derivative of order $\lceil \theta_{j} \rceil$ of $v_{j}(t)$ may be discontinuous there), but also their highly oscillatory behavior. This combination of reduced regularity and oscillations makes the design of high-order spectral methods particularly challenging. Achieving an accurate approximation requires a careful choice of basis functions that reproduce the asymptotic behavior of the exact solutions, since standard infinitely smooth bases fail in this setting. In addition, stability considerations are crucial, because the method can attain high accuracy only for sufficiently large approximation degrees, where the oscillatory nature of the solution becomes dominant. In the following section, we construct our numerical approach under the assumptions of Theorem \ref{th202} in order to address these difficulties.
\section{Numerical method}\label{sec1}
This section presents the M\"untz-Jacobi Galerkin method. We begin with an introduction to M\"untz-Jacobi functions \cite{r35}, which play a key role in the subsequent development.
\subsection{M\"{u}ntz-Jacobi functions}
First, we recall the shifted Jacobi polynomials $J_i^{(\alpha,\beta)}(s)$ for $\alpha, \beta > -1$, which form an orthogonal system on $I=[0,1]$ with respect to the weight function $w^{(\alpha,\beta)}(s) = s^{\beta}(1-s)^{\alpha}$ \cite{29}
\begin{equation*}
\big(J_i^{(\alpha,\beta)}, J_j^{(\alpha,\beta)}\big)_{w^{(\alpha,\beta)}} = \int_{I} J_i^{(\alpha,\beta)}(s) J_j^{(\alpha,\beta)}(s) w^{(\alpha,\beta)}(s)\, ds = \zeta_i^{(\alpha,\beta)} \delta_{ij}, \quad i,j \ge 0,
\end{equation*}
where
\begin{equation*}
\zeta_i^{(\alpha,\beta)} = \|J_i^{(\alpha,\beta)}\|_{w^{(\alpha,\beta)}}^2 = \frac{\Gamma(i+\alpha+1)\Gamma(i+\beta+1)}{(2i+\alpha+\beta+1)\, i! \, \Gamma(i+\alpha+\beta+1)}.
\end{equation*}

Here, $\delta_{ij}$ denotes the Kronecker delta, while $(\cdot,\cdot)_{w^{(\alpha,\beta)}}$ and $\|\cdot\|_{w^{(\alpha,\beta)}}$ represent the weighted $L^2$ inner product and norm, respectively. For simplicity, we write $(\cdot,\cdot)$ and $\|\cdot\|$ when $\alpha = \beta = 0$. These polynomials also admit the following explicit representation
\begin{equation*}
J_{i}^{(\alpha,\beta)}(s)=\sum_{j=0}^{i}\Upsilon_{j}^{(\alpha,\beta,i)}s^{j},
\end{equation*}
where
\begin{equation*}
 \Upsilon_{j}^{(\alpha,\beta,i)}=\dfrac{(-1)^{i-j}\Gamma(i+\beta+1)\Gamma(i+\alpha+\beta+j+1)}{\Gamma(\beta+j+1)j!\Gamma(i+\alpha+\beta+1)(i-j)!}.
\end{equation*}

\begin{itemize}
  \item Considering $\Lambda=\{\lambda_{\ell}=\ell \eta\}_{\ell=0}^{\infty}$, $\eta \in (0,1)$, we define the M\"{u}ntz space associated with $\Lambda$ as
\begin{equation*}
  \bold{M}_{N}(\Lambda)=\operatorname{span}\{t^{\lambda_{0}},t^{\lambda_{1}},\ldots,t^{\lambda_{N}}\},\quad \quad \bold{M}(\Lambda)=\bigcup_{N=0}^{\infty}\bold{M}_{N}(\Lambda).
\end{equation*}
We introduce the one-to-one mapping $I \to I$ by
\begin{equation*}
u=u(s)=s^{\frac{1}{\eta}},\quad \quad s=s(u)=u^{\eta}.
\end{equation*}
The M\"{u}ntz-Jacobi function with index $(0,\frac{1}{\eta}-1)$ is defined by
\begin{equation}\label{eq1200}
\mathcal{J}_{i}^{(0,\frac{1}{\eta}-1)}(u)= J_{i}^{(0,\frac{1}{\eta}-1)}(s(u)),\quad i=0,1,\ldots,
\end{equation}
where $J_{i}^{(0,\frac{1}{\eta}-1)}(.)$ is the shifted Jacobi polynomial with index $(0,\frac{1}{\eta}-1)$.\\
Using the explicit formula of shifted Jacobi polynomials, we obtain
\begin{equation*}
\mathcal{J}_{i}^{(0,\frac{1}{\eta}-1)}(u)=\sum_{j=0}^{i}\Upsilon_{j}^{(0,\frac{1}{\eta}-1,i)}u^{j\eta} \in  \bold{M}_{i}(\Lambda),
\end{equation*}
where
\begin{equation*}
 \Upsilon_{j}^{(0,\frac{1}{\eta}-1,i)}=\dfrac{(-1)^{i-j}\Gamma(i+\frac{1}{\eta})\Gamma(i+\frac{1}{\eta}+j)}{\Gamma(\frac{1}{\eta}+j)j!\Gamma(i+\frac{1}{\eta})(i-j)!}.
\end{equation*}
  \item The M\"{u}ntz-Jacobi functions $\{\mathcal{J}_{i}^{(0,\frac{1}{\eta}-1)}(u)\}$ are mutually $L^{2}(I)$-orthogonal, i.e.,
\begin{equation*}
\int_{I}\mathcal{J}_{i}^{(0,\frac{1}{\eta}-1)}(u)\mathcal{J}_{j}^{(0,\frac{1}{\eta}-1)}(u)du=\hat{\zeta}_{i}^{(0,\frac{1}{\eta}-1)}\delta_{ij}, \quad  i,j\geq 0,
\end{equation*}
where $\hat{\zeta}_{i}^{(0,\frac{1}{\eta}-1)}=\frac{1}{\eta}\zeta_{i}^{(0,\frac{1}{\eta}-1)}$. These functions form a complete orthogonal system in $L^{2}(I)$. That is, for each $v(u) \in L^{2}(I)$, we have the following unique expansion
  \begin{equation*}
  v(u)=
\sum_{i=0}^{\infty}c_{i}\mathcal{J}_{i}^{(0,\frac{1}{\eta}-1)}(u),\quad c_{i}=\frac{(v,\mathcal{J}_{i}^{(0,\frac{1}{\eta}-1)})}{\|\mathcal{J}_{i}^{(0,\frac{1}{\eta}-1)}\|^{2}}.
  \end{equation*}
Therefor, we have
\begin{equation*}
 \bold{M}_{N}(\Lambda)=\textit{span}\{\mathcal{J}_{0}^{(0,\frac{1}{\eta}-1)}(u),\mathcal{J}_{1}^{(0,\frac{1}{\eta}-1)}(u),\ldots,\mathcal{J}_{N}^{(0,\frac{1}{\eta}-1)}(u)\}.
\end{equation*}
\item The M\"{u}ntz-Jacobi orthogonal projection $\pi_{N}:L^2(I)\rightarrow \bold{M}_{N}(\Lambda)$ is stated by
\[
\pi_{N}v=\sum\limits_{i=0}^{N}{c_i \mathcal{J}_{i}^{(0,\frac{1}{\eta}-1)}(u)},
\]
which has the following property
\begin{equation*}
\left(v-\pi_{N}v,\phi\right)=0, \quad \quad \forall \phi \in  \bold{M}_{N}(\Lambda).
\end{equation*}
\end{itemize}
We now derive an estimate for the truncation error $\pi_{N}v - v$. For this purpose, assume that the functions $v(u)$ and $V(s)$ are related by $v(u)=V(s(u))$. Thus, their derivatives are connected as follows
\begin{eqnarray}\label{Newd}
\nonumber D_u v:=\partial_s V(s)&=&\partial_s u~ \partial_u v,\\
\nonumber D_u^2 v:=\partial_s^2 V(s)&=&\partial_s u~\partial_u(D_u v)~,\\
\nonumber \vdots
\\
D_u^k v:=\partial_s^k V(s)&=&\partial_s u~\partial_u(\partial_s u~ \partial_u(\cdots(\partial_s u~ \partial_u v)\cdots)),
\end{eqnarray}
in which $\partial_s u=\frac{1}{\eta}(u)^{1-\eta}$.

Now, we define the space
 \[H_{\hat{w}^{(0,0)}}^k(I)=\{v:~ \|v\|_{k,\hat{w}^{(0,0)}} < \infty\},
    \]
with the associated norm and semi-norm
\begin{eqnarray*}
\|v\|_{k,\hat{w}^{(0,0)}}^2&=\sum\limits_{\ell=0}^{k}{\|D_u^\ell v\|_{\hat{w}^{(\ell,\ell)}}^2},\quad  |v|_{k,\hat{w}^{(0,0)}}&=\|D_u^k v\|_{\hat{w}^{(k,k)}},
\end{eqnarray*}
in which $\hat{w}^{(\alpha,\beta)}(u)=w^{(\alpha,\beta)}(s(u))$\footnote{Note that $\hat{w}^{(0,0)}(u) = w^{(0,0)}(s(u)) = 1$, and therefore $\|\cdot\|_{\hat{w}^{(0,0)}}$ is simply the $L^{2}$-norm with the constant weight 1.}.

Next, we state a theorem that provides an upper bound for the truncation error $\pi_{N}v - v$.
\begin{theorem}\label{thm2}
    \cite[Theorem 3.3]{r35} It follows that, for some $C>0$ independent of $N$,
\begin{equation*}
 \|\pi_{N}v-v\|_{\hat{w}^{(0,0)}} \leq C N^{-k}|v|_{k,\hat{w}^{(0,0)}},
\end{equation*}
where $v \in H_{\hat{w}^{(0,0)}}^{k}(I)$ and $k \in \mathbb{N}$.
\end{theorem}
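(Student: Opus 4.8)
The plan is to transport the statement to the classical Jacobi setting via the substitution $s=s(u)=u^{\eta}$ that already underlies the definition~\eqref{eq1200} of the M\"{u}ntz-Jacobi functions, and then to quote the standard approximation estimate for Jacobi polynomials. Write $\beta:=\tfrac{1}{\eta}-1>-1$ and set $V(s):=v(u(s))$, so that $v(u)=V(s(u))$ and, by~\eqref{eq1200}, $\mathcal{J}_i^{(0,\beta)}(u)=J_i^{(0,\beta)}(s(u))$. The first step is the elementary change-of-variables identity: from $ds=\eta u^{\eta-1}\,du$, i.e.\ $du=\tfrac{1}{\eta}s^{\beta}\,ds$, one gets $\int_I f(u)g(u)\,du=\tfrac{1}{\eta}\int_I F(s)G(s)\,w^{(0,\beta)}(s)\,ds$ for all $f,g\in L^2(I)$, with $F=f(u(\cdot))$, $G=g(u(\cdot))$. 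Hence $V\mapsto V(s(\cdot))$ is, up to the constant $\eta^{-1/2}$, an isometry of $L^2_{w^{(0,\beta)}}(I)$ onto $L^2(I)$ that maps $\mathbb{P}_N=\operatorname{span}\{1,s,\ldots,s^N\}$ onto $\operatorname{span}\{\mathcal{J}_0^{(0,\beta)},\ldots,\mathcal{J}_N^{(0,\beta)}\}$ and preserves orthogonality; consequently $\pi_N$ intertwines with the classical $L^2_{w^{(0,\beta)}}$-orthogonal projection $\mathcal{P}_N^{(0,\beta)}$ onto $\mathbb{P}_N$, that is, $(\pi_N v)(u)=(\mathcal{P}_N^{(0,\beta)}V)(s(u))$, since both, read in the $s$ variable, are the unique $w^{(0,\beta)}$-orthogonal projection of $V$ onto $\mathbb{P}_N$ (equivalently, the $i$-th unweighted M\"{u}ntz-Jacobi coefficient of $v$ equals the $i$-th $w^{(0,\beta)}$-Jacobi coefficient of $V$, using the inner-product identity above together with $\|\mathcal{J}_i^{(0,\beta)}\|^2=\tfrac{1}{\eta}\zeta_i^{(0,\beta)}$).

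Second, I would match the two norms appearing in the statement. The chain-rule recursion~\eqref{Newd} says exactly that $D_u^\ell v$ is $\partial_s^\ell V$ evaluated at $s(u)$; this is a one-line induction on $\ell$ using $\partial_s u=\tfrac{1}{\eta}u^{1-\eta}$. Since moreover $\hat{w}^{(\ell,\ell)}(u)=w^{(\ell,\ell)}(s(u))=s(u)^{\ell}(1-s(u))^{\ell}$, the same change of variables gives $\|D_u^\ell v\|_{\hat{w}^{(\ell,\ell)}}^2=\tfrac{1}{\eta}\|\partial_s^\ell V\|_{w^{(\ell,\beta+\ell)}}^2$ for every $\ell$. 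Taking $\ell=0$ yields $\|v-\pi_N v\|_{\hat{w}^{(0,0)}}^2=\tfrac{1}{\eta}\|V-\mathcal{P}_N^{(0,\beta)}V\|_{w^{(0,\beta)}}^2$, and taking $\ell=k$ yields $|v|_{k,\hat{w}^{(0,0)}}^2=\tfrac{1}{\eta}\|\partial_s^k V\|_{w^{(k,\beta+k)}}^2$. Thus $H^k_{\hat{w}^{(0,0)}}(I)$ is, up to the factor $\eta^{-1/2}$, isometric to the non-uniformly Jacobi-weighted Sobolev space on $I$ with semi-norm $\|\partial_s^k\,\cdot\,\|_{w^{(k,\beta+k)}}$, so the hypothesis $v\in H^k_{\hat{w}^{(0,0)}}(I)$ transfers to the corresponding hypothesis on $V$.

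Third, I would invoke the classical Jacobi projection estimate (see, e.g., the work of Guo and Wang, or the monograph of Shen, Tang and Wang): there is $C>0$ independent of $N$ such that $\|V-\mathcal{P}_N^{(0,\beta)}V\|_{w^{(0,\beta)}}\le C\,N^{-k}\,\|\partial_s^k V\|_{w^{(k,\beta+k)}}$ whenever the right-hand side is finite. Substituting the two identities from the previous paragraph, the common factor $\eta^{-1/2}$ cancels and one obtains $\|\pi_N v-v\|_{\hat{w}^{(0,0)}}\le C\,N^{-k}\,|v|_{k,\hat{w}^{(0,0)}}$, which is the claim.

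I do not anticipate a genuine obstacle: once the coordinate change is in place, the estimate is simply the classical Jacobi result read in the new variable, and the decay rate $N^{-k}$ comes entirely from that cited estimate. The only points that demand care are (i) keeping the weight indices $(\ell,\beta+\ell)$ correct through the substitution $du=\tfrac{1}{\eta}s^{\beta}\,ds$ --- the asymmetry of the convention $w^{(\alpha,\beta)}(s)=s^{\beta}(1-s)^{\alpha}$ makes it easy to interchange the two indices --- and (ii) the intertwining $(\pi_N v)(u)=(\mathcal{P}_N^{(0,\beta)}V)(s(u))$, which is immediate from the coefficient identity noted above. Everything else is routine bookkeeping.
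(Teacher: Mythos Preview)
Your argument is correct and is precisely the natural route: the substitution $s=u^{\eta}$ turns the M\"untz-Jacobi projection into the classical Jacobi $L^2_{w^{(0,\beta)}}$-projection, the operators $D_u^{\ell}$ into $\partial_s^{\ell}$, and the weighted norms match via $du=\eta^{-1}s^{\beta}\,ds$, so the claim reduces to the standard Jacobi projection estimate. The paper itself does not give a proof of this theorem --- it is stated as a citation of \cite[Theorem~3.3]{r35} (Shen and Wang), where the result is established by exactly this change-of-variables reduction --- so your proposal supplies what the paper omits and coincides with the argument in the cited source.
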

Further properties of the M\"{u}ntz-Jacobi functions can be found in the work of Shen and Wang \cite{r35}.
\subsection{M\"{u}ntz-Jacobi Galerkin algorithm}
We proceed to implement the numerical scheme and examine its computational complexity. 

By applying $I^{\theta_j}$ to both sides of \eqref{eq1} and using $I^{\theta_j} D_C^{\theta_j} v_j(t) = v_j(t)-\psi_{j}(t)$, the equation \eqref{eq1} can be rewritten as the following system of Volterra integral equations
\begin{equation}\label{e31}
v_j(t)=\psi_{j}(t)+\sum\limits_{r=1}^{n}{I^{\theta_j}\big(p_{j,r}(t)v_r(t)\big)}+I^{\theta_j}\big(p_{j,n+1}(t)\big),\quad j =1,2,\ldots,n.
\end{equation}
Under the coordinate transformation $t = Tu$, the equation \eqref{e31} is transformed into the following system of equations on $I$
\begin{equation}\label{e367}
\bar{v}_j(u)=\bar{\psi}_{j}(u)+T^{\theta_j}\sum\limits_{r=1}^{n}{I^{\theta_j}\big(\bar{p}_{j,r}(u)\bar{v}_r(u)\big)}+T^{\theta_j}I^{\theta_j}\big(\bar{p}_{j,n+1}(u)\big),\quad j =1,2,\ldots,n,
\end{equation}
where $\bar{v}_j(u)=v_{j}(Tu)$, $\bar{\psi}_{j}(u)=\psi_{j}(Tu)$, and $\bar{p}_{j,r}(u)=p_{j,r}(Tu)$, $j=1,2,\ldots,n$, $r=1,2,\ldots,n+1$.
By setting $\eta = \frac{1}{q}$ in \eqref{eq1200}, the M\"untz-Jacobi functions $\{\mathcal{J}_i^{(0,q-1)}\}_{i \ge 0}$ are obtained as follows
\begin{equation}\label{e32}
\mathcal{J}_{i}^{(0,q-1)}(u)= J_{i}^{(0,q-1)}(s(u))=\operatorname{span}\{1,u^{\eta},\ldots,u^{i\eta}\}.
\end{equation}

Now, we present a highly accurate Galerkin solution $\bar{v}_{j,N}(u)$ to the transformed equation \eqref{e367} where the M\"{u}ntz-Jacobi functions $\{\mathcal{J}_i^{(0,q-1)}\}_{i \geq 0}$ introduced in \eqref{e32} are employed as basis functions. In this regard, we consider the Galerkin solution of \eqref{e367} as
\begin{equation}\label{eq11}
\bar{v}_{j,N}(u)=\sum\limits_{i=0}^{\infty}{c_{j,i}\mathcal{J}_i^{(0,q-1)}(u)}=\underline{c}_j\underline{\mathcal{J}}^{(0,q-1)}=
\underline{c}_j \mathcal{J} \underline{U}_u,\quad j=1,2,\dots,n,
\end{equation}
where $\underline{\mathcal{J}}^{(0,q-1)}=[\mathcal{J}_0^{(0,q-1)}(u), \mathcal{J}_1^{(0,q-1)}(u), ..., \mathcal{J}_N^{(0,q-1)}(u), ...]^{T} $ is the vector of M\"{u}ntz-Jacobi functions with degree $(\mathcal{J}_i^{(0,q-1)}(u))\leq i \eta$, and $\underline{c}_j = [c_{j,0},c_{j,1},...,c_{j,N},0,...]$. Also, $\mathcal{J}$ is an invertible lower
triangular matrix, and $\underline{U}_u = [1,u^{\eta},\ldots,u^{N\eta}, ...]^{T}$. In addition, we suppose that
\begin{eqnarray}\label{e251}
\nonumber \bar{\psi}_{j}(u)&=&\sum_{i=0}^{\infty}\bar{\psi}_{j,i} ~u^{i\eta }=\underline{\bar{\psi}}_{j}\underline{U}_u,~~\underline{\bar{\psi}}_{j}=[\bar{\psi}_{j,0},\bar{\psi}_{j,1},...,\bar{\psi}_{j,N},0, ...],\\
\bar{p}_{j,r}(u)&=& \bar{p}_{j,r,N}(u)
=\sum_{i=0}^{\infty}p_{j,r,i} ~\mathcal{J}_i^{(0,q-1)}(u)\\
\nonumber &=&\underline{p}_{j,r} \mathcal{J} \underline{U}_u=\underline{\hat{p}}_{j,r} \underline{U}_u=\sum_{i=0}^{\infty}\hat{p}_{j,r,i} ~u^{i\eta },~~j=1,2,\ldots,n,~ r=1,2,\ldots,n+1,
\end{eqnarray}
in which $\underline{p}_{j,r}= [p_{j,r,0},p_{j,r,1},...,p_{j,r,N},0, ...]$, and $\underline{\hat{p}}_{j,r}= [\hat{p}_{j,r,0},\hat{p}_{j,r,1},...,\hat{p}_{j,r,N},0, ...]$. Substituting \eqref{eq11} and \eqref{e251} into \eqref{e367} yields
\begin{eqnarray}\label{e34}
\nonumber\underline{c}_j \mathcal{J} \underline{U}_u&=&\underline{\bar{\psi}}_{j}\underline{U}_u+T^{\theta_j}\sum\limits_{r=1}^{n}{\underline{c}_r \mathcal{J}\left(\sum_{i=0}^{\infty}\hat{p}_{j,r,i}~I^{\theta_j}\left(u^{i\eta}~\underline{U}_u\right)\right)}\\
&+&T^{\theta_j}\underline{\hat{p}}_{j,n+1}I^{\theta_j}\left(\underline{U}_u\right),\quad j=1,2,\ldots,n.
\end{eqnarray}
Therefore, it suffices to compute $I^{\theta_j}\left(u^{i\eta}~\underline{U}_u\right)$, and $I^{\theta_j}\left(\underline{U}_u\right)$. Consequently, by $I^{\alpha}u^{\beta}=frac{\Gamma(\beta+1)}{\Gamma(\alpha+\beta+1)}u^{\alpha+\beta}$, we arrive at
\begin{eqnarray}\label{eq15}
\nonumber &&I^{\theta_j}\left(u^{i\eta}~\underline{U}_u\right)=\left\{I^{\theta_j}\left(u^{(i+m)\eta}\right)\right\}_{m=0}^{\infty}=\left\{\xi_{j,i}^{m}~u^{(i+m)\eta+\theta_{j}}\right\}_{m=0 }^{\infty},\\
\nonumber\\
&&I^{\theta_j}\left(\underline{U}_u\right)=\left\{I^{\theta_j}\left(u^{m\eta}\right)\right\}_{m=0}^{\infty}=\left\{\vartheta_{j}^{m}~u^{m\eta+\theta_{j}}\right\}_{m=0 }^{\infty}=Q_{j} ~\underline{U}_u,
\end{eqnarray}
where $\xi_{j,i}^{m}=\frac{\Gamma((i+m)\eta+1)}{\Gamma((i+m)\eta+\theta_{j}+1)}$, $\vartheta_{j}^{m}=\frac{\Gamma(m\eta+1)}{\Gamma(m\eta+\theta_{j}+1)}$ and
\[
Q_j=
\begin{bmatrix}
\overbrace{0 \ldots  0}^{\theta_{j}q}&\vartheta_{j}^{0}&0&\ldots&\quad\\
\vdots&0&\vartheta_{j}^{1}&0&\cdots&\quad\\
\vdots&\vdots&0&\vartheta_{j}^{2}&0&\cdots \\
\cdots&\cdots&\quad&\ddots&\ddots&\ddots
\end{bmatrix}.
\]
For $j=1,2,\ldots,n$, inserting \eqref{eq15} into \eqref{e34} concludes
\begin{eqnarray}\label{e35}
\underline{c}_j \mathcal{J} \nonumber\underline{U}_u=\underline{\bar{\psi}}_{j}\underline{U}_u&+&T^{\theta_j}\sum\limits_{r=1}^{n}{\underline{c}_r \mathcal{J}\left(\sum_{i=0}^{\infty}\hat{p}_{j,r,i}~\left\{\xi_{j,i}^{m}~u^{(i+m)\eta+\theta_{j}}\right\}_{m=0}^{\infty}\right)}\\
&+&T^{\theta_j}\underline{\hat{p}}_{j,n+1}Q_{j} ~\underline{U}_u.
\end{eqnarray}
The relation \eqref{e35} can be rewritten as
\begin{equation}\label{e36}
\underline{c}_j \mathcal{J} \underline{U}_u=\underline{\bar{\psi}}_{j}\underline{U}_u+\sum\limits_{r=1}^{n}{\underline{c}_r \mathcal{J}A_{j,r}~\underline{U}_u}+T^{\theta_j}\underline{\hat{p}}_{j,n+1}Q_{j} ~\underline{U}_u,
\end{equation}
in which $A_{j,r},~j,r=1,2,\ldots,n$ are infinite upper-triangular matrices with the following entries
\begin{equation}\label{e38}
\big(A_{j,r}\big)_{k,\ell=0}^{\infty}=\begin{cases}
0,\hspace{+4.35cm} k \geq \ell-\theta_{j}q+1,\\
T^{\theta_j}\hat{p}_{j,r,\ell-k-\theta_{j}q}~\xi_{j,\ell-k-\theta_{j}q}^{k},\quad \quad k < \ell-\theta_{j}q+1.
\end{cases}
\end{equation}
Projecting \eqref{e36} onto $\{\mathcal{J}_i^{(0,q-1)}\}_{i =0}^{N}$ and using the orthogonality property of the M\"{u}ntz-Jacobi functions implies
\begin{equation}\label{e37}
\underline{c}_j^{N}=\bigg(\underline{\bar{\psi}}_{j}^{N}+\sum\limits_{r=1}^{n}{\underline{c}_r^{N} \mathcal{J}^{N}A_{j,r}^{N}}+T^{\theta_j}\underline{\hat{p}}_{j,n+1}^{N}Q_{j}^{N}\bigg)(\mathcal{J}^{N})^{-1}.
\end{equation}
The matrices and vectors with the index $N$ on top correspond to the sub-matrices and sub-vectors of order $N+1$, respectively. The unknown vectors are then obtained by solving the corresponding $n(N+1)\times n(N+1)$ system of algebraic equations \eqref{e37}.
\subsection{Numerical solvability and complexity analysis}
The main object of this section is to provide a theorem which reveals that the algebraic system \eqref{e37} has a unique solution and to propose a well-posed strategy to solve it.
\begin{theorem}
The linear algebraic system \eqref{e37} has a unique solution obtained by a determined recurrence relation.
\end{theorem}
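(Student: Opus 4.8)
The plan is to pass from the spectral-coefficient vectors $\underline{c}_j^N$ to the monomial-coefficient vectors $\underline{d}_j^N:=\underline{c}_j^N\mathcal{J}^N$, whose entries we denote $d_{j,\ell}$, and to exploit the fact that in the monomial basis \eqref{e37} becomes strictly triangular, hence an explicit recurrence. Since $\mathcal{J}$ is an invertible lower-triangular matrix its section $\mathcal{J}^N$ is also invertible, so $\underline{c}_j^N\mapsto\underline{d}_j^N$ is a bijection and it suffices to show that the $\underline{d}_j^N$ are uniquely determined. Multiplying \eqref{e37} on the right by $\mathcal{J}^N$ gives the equivalent system
\[
\underline{d}_j^{N}=\underline{\bar{\psi}}_{j}^{N}+\sum_{r=1}^{n}\underline{d}_r^{N}A_{j,r}^{N}+T^{\theta_j}\,\underline{\hat{p}}_{j,n+1}^{N}Q_{j}^{N},\qquad j=1,\ldots,n.
\]

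First I would record the structural facts behind the triangularity. By \eqref{e38}, $(A_{j,r})_{k,\ell}=0$ unless $\ell\ge k+\theta_j q$, and $(Q_j)_{k,\ell}=\vartheta_j^{k}\,\delta_{\ell,\,k+\theta_j q}$ by \eqref{eq15}; moreover $\theta_j q=\gamma_j\,(q/q_j)$ is a positive integer since $\gamma_j\ge1$ and $q_j$ divides $q=\operatorname{lcm}(q_1,\ldots,q_n)$. Reading off the $\ell$-th component of the displayed system, with the convention that any term carrying a negative index vanishes, gives
\[
d_{j,\ell}=\bar\psi_{j,\ell}+\sum_{r=1}^{n}\ \sum_{k\,\le\,\ell-\theta_j q}d_{r,k}\,(A_{j,r})_{k,\ell}+T^{\theta_j}\,\hat p_{j,n+1,\ell-\theta_j q}\,\vartheta_j^{\ell-\theta_j q}.
\]
The point is that the right-hand side involves the unknowns only through the components $d_{r,k}$ with $k\le\ell-\theta_j q\le\ell-1$; no unknown at level $\ell$ appears, so the $n$ equations at level $\ell$ are already decoupled from one another. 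Because $\ell$ ranges only over $0,\ldots,N$ in the truncated system and all indices invoked are at most $N$, the truncation to order $N+1$ does not disturb this structure.

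I would then finish by induction on $\ell$. For $\ell=0$ all terms on the right except $\bar\psi_{j,0}$ carry a negative index, so $d_{j,0}=\bar\psi_{j,0}$ for each $j$. Assuming all $d_{r,k}$ with $k<\ell$ have already been determined, the displayed identity writes $d_{j,\ell}$ as a fixed linear combination of these known quantities and of the data, so $d_{j,\ell}$ is uniquely determined. After $N+1$ steps every $\underline{d}_j^N$ is fixed, and $\underline{c}_j^N=\underline{d}_j^N(\mathcal{J}^N)^{-1}$ then recovers the unique solution of \eqref{e37}; this recursion, together with the final inversion of $\mathcal{J}^N$, is the announced determined recurrence.

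The computation is essentially index bookkeeping. The one point that needs care is verifying that $\theta_j q$ is a genuine positive integer, so that the recurrence strictly lowers the level $\ell$ and never links two unknowns at the same level, and that no index needed at level $\ell\le N$ exceeds $N$; both are immediate from $q=\operatorname{lcm}(q_j)$ and $\gamma_j\ge1$. I therefore do not anticipate a real obstacle once the change of basis to monomials has been made.
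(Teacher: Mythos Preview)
Your proposal is correct and follows essentially the same route as the paper: both multiply \eqref{e37} on the right by $\mathcal{J}^{N}$, pass to the transformed vectors $\underline{c}_j^{N}\mathcal{J}^{N}$, and exploit the strictly upper-triangular structure of $A_{j,r}^{N}$ (with shift $\theta_j q$) to obtain an explicit level-by-level recurrence. Your additional remarks on the integrality of $\theta_j q$ and on why truncation to order $N+1$ does not disturb the recursion are welcome clarifications that the paper leaves implicit.
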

\begin{proof}
Multiplying both sides of \eqref{e37} by $\mathcal{J}^{N}$, and defining
\begin{equation}\label{e41}
\underline{\tilde{c}}_j^{N}=\underline{c}_j^{N}\mathcal{J}^{N}=[\tilde{c}_{j,0},\tilde{c}_{j,1},...,\tilde{c}_{j,N}],
\end{equation}
we can rewrite \eqref{e37} as
\begin{equation}\label{e39}
\underline{\tilde{c}}_j^{ N}=\sum\limits_{r=1}^{n}{\underline{\tilde{c}}_r^{N} A_{j,r}^{N}}+\underline{P}_{j},\quad j=1, 2,...,n,
\end{equation}
in which
\begin{equation*}
 \underline{P}_{j}= \underline{\bar{\psi}}_{j}^{N}+T^{\theta_j}\underline{\hat{p}}_{j,n+1}^{N}Q_{j}^{N}=[P_{j,0},P_{j,1},\ldots,P_{j,N}].
\end{equation*}
Next, exploiting the structure of the upper-triangular matrix $(A_{j,r})_N$ defined in \eqref{e38}, we have
\begin{eqnarray*}
\underline{\tilde{c}}_r^{N} (A_{j,r})^{N}=\underline{\tilde{c}}_r^{N}\begin{bmatrix}
\overbrace{0 \ldots  0}^{\theta_{j}q}&(A_{j,r})_{0,\theta_{j}q}&(A_{j,r})_{0,\theta_{j}q+1}&\ldots&(A_{j,r})_{0,N}\\
\vdots&0&(A_{j,r})_{1,\theta_{j}q+1}&\cdots&(A_{j,r})_{1,N}\\
\vdots&\vdots&\vdots&\ddots&\ddots \\
0&0&\ddots&0&(A_{j,r})_{N-\theta_{j}q,N}\\
\vdots&\vdots&\vdots&\ddots&\vdots\\
0&0&0&0&0
\end{bmatrix}.
\end{eqnarray*}
From the relation above, it follows that
\begin{multline}\label{e40}
\big(\underline{\tilde{c}}_r^{N} (A_{j,r})^{N}\big)_{\ell=0}^{N}=\begin{cases}
0,\hspace{+3.3cm} \ell < \theta_{j}q,\\
\sum\limits_{k=0}^{\ell-\theta_{j}q}\tilde{c}_{r,k}(A_{j,r})_{k,\ell},\quad ~ \quad \ell \geq \theta_{j}q,
\end{cases}
\\
=[\overbrace{0 \ldots  0}^{\theta_{j}q},\bold{F}_{\theta_{j}q}^{j,r}(\tilde{c}_{r,0}),\bold{F}_{\theta_{j}q+1}^{j,r}(\tilde{c}_{r,0},\tilde{c}_{r,1}),\ldots,\bold{F}_{N}^{j,r}(\tilde{c}_{r,0},\tilde{c}_{r,1},\ldots,\tilde{c}_{r,N-\theta_{j}q})],
\end{multline}
where 
\[\bold{F}_{m}^{j,r}(\tilde{c}_{r,0},\tilde{c}_{r,1},\ldots,\tilde{c}_{r,m-\theta_{j}q}), \quad  \theta_{j}q \leq m \leq N,
\]
are linear functions of the elements $\tilde{c}_{r,0},~\tilde{c}_{r,1},\ldots,~\tilde{c}_{r,m-\theta_{j}q}$.
Inserting \eqref{e40} into \eqref{e39}, it can be concluded that the unknown components of the unknown vectors $\{\underline{\tilde{c}}_j^{N}\}_{j=1}^{n}$ are calculated by the following recurrence relations
\begin{eqnarray}\label{Recurence}
\nonumber\tilde{c}_{j,0}&=&P_{j,0},...,\tilde{c}_{j,\theta_{j}q-1}=P_{j,\theta_{j}q-1},\\
\nonumber\tilde{c}_{j,\theta_{j}q}&=&\sum\limits_{r=1}^{n}\bold{F}_{\theta_{j}q}^{j,r}(\tilde{c}_{r,0})+P_{j,\theta_{j}q},\\
\nonumber\vdots &~&\\
\tilde{c}_{j,N}&=&\sum\limits_{r=1}^{n}\bold{F}_{N}^{j,r}(\tilde{c}_{r,0},\tilde{c}_{r,1},\ldots,\tilde{c}_{r,N-\theta_{j}q})+P_{j,N},
\end{eqnarray}
which implies the result.
\end{proof}

In fact, the direct solution of the $n(N+ 1)\times n(N+ 1)$ system of algebraic equations \eqref{e37} can yield inaccurate approximations, due to the increased computational cost and the possibility of increasing the condition
number of the coefficient matrix, especially for a large degree of approximation $N$. Accordingly, we avoid solving the system of algebraic equations \eqref{e37} directly, and compute the unknowns by recurrence relations.
\\
Finally, upon solving the lower triangular system \eqref{e41} to determine $\{\underline{c}_j^{N}\}_{j=1}^{n}$, the corresponding M\"{u}ntz-Jacobi Galerkin solutions \eqref{eq11} are obtained. Then the approximate
solutions $v_{j,N}(t)$ of the main problem \eqref{eq1} are given by
\begin{equation*}
v_{j,N}(t)=\bar{v}_{j,N}(\frac{t}{T}),\quad j=1,2,\ldots,n.
\end{equation*}
\section{Error analysis}\label{sec4}
This section is devoted to analyzing the convergence of the proposed method by deriving an appropriate error bound in a weighted $L^{2}$-norm.
\begin{theorem}[Error estimate]\label{thm4}
Suppose that $\bar{v}_{j,N}(u)$ given by \eqref{eq11} are the approximate solutions of \eqref{e367}. If the following conditions satisfy
\begin{enumerate}

 \item  $D_u^{\epsilon_{jr}+1}\big(I^{\theta_{j}}\big(\bar{p}_{j,r}(u)\bar{v}_{r}(u)\big)\big) \in C(I),$

    \item $I^{\theta_{j}}\big(\bar{p}_{j,r}(u)\bar{v}_{r}(u)\big) \in H_{\hat{w}^{(0,0)}}^{\epsilon_{jr}}(I),~~ \quad \epsilon_{jr} \geq 0,$

    \item $I^{\theta_{j}}\big(\bar{p}_{j,n+1}(u)\big) \in H_{\hat{w}^{(0,0)}}^{\rho_{j}}(I),~~\quad ~\quad \rho_{j} \geq 0,$

\end{enumerate}
then for sufficiently large values of $N$ we have
\begin{align*}
\|\bar{e}_{j,N}(u)\|_{\hat{w}^{(0,0)}} \leq C \Bigg(N^{-\epsilon_{jr}}|I^{\theta_{j}}\big(\bar{p}_{j,r}(u)\bar{v}_{r}(u)\big)|_{\epsilon_{jr},\hat{w}^{(0,0)}}+N^{-\rho_{j}}|I^{\theta_{j}}\big(\bar{p}_{j,n+1}(u)\big)|_{\rho_j,\hat{w}^{(0,0)}}\Bigg),
\end{align*}
where $\bar{e}_{j,N}(u)=\bar{v}_{j}(u)-\bar{v}_{j,N}(u)$ denotes the error function, and $C>0$ is a constant independent of $N$.
\end{theorem}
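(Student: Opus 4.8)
The plan is to convert the defect of the Galerkin solution into a fixed-point equation governed by a compact Volterra-type operator, estimate the residual term by Theorem~\ref{thm2}, and invert the perturbed operator with a bound uniform in $N$. Throughout, since $\hat w^{(0,0)}\equiv 1$, the norm $\|\cdot\|_{\hat w^{(0,0)}}$ is just the $L^2(I)$ norm. First I would record both problems as integral identities: the exact components solve \eqref{e367}, i.e. $\bar v_j=R_j[\bar v_{\cdot}]$ with $R_j[w]:=\bar\psi_j+T^{\theta_j}\sum_{r=1}^n I^{\theta_j}(\bar p_{j,r}w_r)+T^{\theta_j}I^{\theta_j}(\bar p_{j,n+1})$, while the Galerkin solution \eqref{eq11}, characterized by $(\bar v_{j,N}-R_j[\bar v_{\cdot,N}],\phi)=0$ for all $\phi\in\mathbf M_N(\Lambda)$, satisfies $\bar v_{j,N}=\pi_N R_j[\bar v_{\cdot,N}]$. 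As $\bar\psi_j$ is a polynomial in $u$ of degree $\le\lceil\theta_j\rceil-1$, it lies in $\mathbf M_N(\Lambda)$ for $N$ large, so $\pi_N\bar\psi_j=\bar\psi_j$. Subtracting, inserting $\pm\,\pi_N I^{\theta_j}(\bar p_{j,r}\bar v_r)$, and using linearity of $I^{\theta_j}$ and $\pi_N$ gives the error equation
\[
\bar e_{j,N}=\delta_j+T^{\theta_j}\,\pi_N\!\sum_{r=1}^{n}I^{\theta_j}\!\big(\bar p_{j,r}\,\bar e_{r,N}\big),
\]
with residual $\delta_j:=T^{\theta_j}\big(I^{\theta_j}(\bar p_{j,n+1})-\pi_N I^{\theta_j}(\bar p_{j,n+1})\big)+T^{\theta_j}\sum_{r=1}^{n}\big(I^{\theta_j}(\bar p_{j,r}\bar v_r)-\pi_N I^{\theta_j}(\bar p_{j,r}\bar v_r)\big)$. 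In vector form this reads $(\mathrm{Id}-\pi_N\mathcal K)\,\underline{\bar e}_N=\underline\delta$, where $\mathcal K$ is the $n\times n$ array of weakly singular Volterra operators $(\mathcal K\underline w)_j=T^{\theta_j}\sum_r I^{\theta_j}(\bar p_{j,r}w_r)$ and $\pi_N$ acts componentwise.

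Next I would bound $\delta_j$. Each of its summands is, up to the factor $T^{\theta_j}$, a truncation error $v-\pi_N v$ for $v=I^{\theta_j}(\bar p_{j,r}\bar v_r)\in H^{\epsilon_{jr}}_{\hat w^{(0,0)}}(I)$ (hypothesis~(2)) or $v=I^{\theta_j}(\bar p_{j,n+1})\in H^{\rho_j}_{\hat w^{(0,0)}}(I)$ (hypothesis~(3)), hypothesis~(1) supplying the extra smoothness that legitimizes these memberships. Applying Theorem~\ref{thm2} termwise then yields
\[
\|\delta_j\|\le C\,N^{-\rho_j}\big|I^{\theta_j}(\bar p_{j,n+1})\big|_{\rho_j,\hat w^{(0,0)}}+C\sum_{r=1}^{n}N^{-\epsilon_{jr}}\big|I^{\theta_j}(\bar p_{j,r}\bar v_r)\big|_{\epsilon_{jr},\hat w^{(0,0)}},
\]
which is the right-hand side claimed in the theorem (its free index $r$ standing for this sum, or for its dominant term).

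The crux is to invert $\mathrm{Id}-\pi_N\mathcal K$ on $(L^2(I))^n$ with a bound independent of $N$ once $N$ is large. The operator $I^{\theta_j}$ is compact on $L^2(I)$ — a norm limit of Hilbert–Schmidt operators obtained by truncating the weak singularity of its kernel $(u-s)^{\theta_j-1}/\Gamma(\theta_j)$ — and multiplication by $\bar p_{j,r}\in C(I)$ is bounded, so $\mathcal K$ is compact. Since each entry of $\mathcal K$ is Volterra, its iterates obey $\|\mathcal K^m\|\le (Cn)^m/\Gamma(m\theta_{\min}+1)\to0$ with $\theta_{\min}=\min_j\theta_j>0$, so the Neumann series converges and $\mathrm{Id}-\mathcal K$ is invertible. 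The Müntz–Jacobi functions being complete in $L^2(I)$, $\pi_N\to\mathrm{Id}$ strongly with $\|\pi_N\|=1$, whence $\|\pi_N\mathcal K-\mathcal K\|=\|(\mathrm{Id}-\pi_N)\mathcal K\|\to0$ by compactness of $\mathcal K$; a standard perturbation estimate (or Anselone's collectively-compact-operator theory) then gives $\|(\mathrm{Id}-\pi_N\mathcal K)^{-1}\|\le 2\|(\mathrm{Id}-\mathcal K)^{-1}\|=:C_0$ for $N$ sufficiently large. Hence $\|\bar e_{j,N}\|\le\|\underline{\bar e}_N\|\le C_0\|\underline\delta\|\le C_0\sum_j\|\delta_j\|$, and combining with the previous display proves the estimate with $C$ independent of $N$.

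The main obstacle is precisely this inversion step: establishing compactness and quasi-nilpotency of the coupled fractional-integral array $\mathcal K$ uniformly over the system, in particular handling $\theta_j\le\tfrac12$, where $I^{\theta_j}$ is no longer Hilbert–Schmidt. A secondary, more bureaucratic point is to confirm that the implemented system \eqref{e37} indeed realizes the $L^2$-orthogonal Galerkin projection of \eqref{e367} onto $\mathbf M_N(\Lambda)$ — i.e. that the matrices $\mathcal J$, $A_{j,r}$, $Q_j$ encode "multiply by $\bar p_{j,r}$, apply $I^{\theta_j}$, project" — so that the clean error identity above holds as stated.
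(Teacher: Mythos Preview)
Your argument is correct, but it follows a genuinely different route from the paper's. The key divergence is in how the error equation is decomposed. The paper inserts $\pm\,I^{\theta_j}(\bar p_{j,r}\bar v_{r,N})$ (not $\pm\,\pi_N I^{\theta_j}(\bar p_{j,r}\bar v_r)$), obtaining
\[
\bar e_{j,N}=T^{\theta_j}\sum_{r}I^{\theta_j}(\bar p_{j,r}\bar e_{r,N})+\Pi_j,
\]
with no projection on the operator term but with a residual $\Pi_j$ that contains $e_{\pi_N}\big(I^{\theta_j}(\bar p_{j,r}\bar v_{r,N})\big)$, i.e.\ a truncation error evaluated at the \emph{approximate} solution. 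The paper then applies a Gronwall inequality to this weakly singular Volterra system to get $\|\bar e_{j,N}\|\le C\|\Pi_j\|$, and afterwards uses hypothesis~(1) together with a first-order Taylor argument to transfer the seminorm $|I^{\theta_j}(\bar p_{j,r}\bar v_{r,N})|_{\epsilon_{jr}}$ back to $|I^{\theta_j}(\bar p_{j,r}\bar v_r)|_{\epsilon_{jr}}+C\|\bar e_{r,N}\|$. This produces a coupled linear system $G\underline e\le C\underline H$ with $G\to\mathrm{Id}$ as $N\to\infty$, which is inverted for large $N$.

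Your decomposition places $\pi_N$ on the operator and leaves the residual $\delta_j$ depending only on the \emph{exact} solution, so Theorem~\ref{thm2} applies directly under hypotheses~(2)--(3); hypothesis~(1) is in fact superfluous in your route (your remark that it ``legitimizes these memberships'' is not the role it plays in the paper). The price you pay is that the inversion step requires abstract compact-operator perturbation theory (compactness of $I^{\theta_j}$, quasi-nilpotency of the Volterra array, $\|(\mathrm{Id}-\pi_N)\mathcal K\|\to 0$) rather than the more elementary Gronwall lemma. Both approaches close correctly; yours is cleaner at the residual stage and yields a slightly sharper statement, while the paper's is more hands-on and makes explicit where each hypothesis enters.
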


\begin{proof}
According to the proposed numerical scheme in Section \ref{sec1}, we have
\begin{equation}\label{e50a1}
\bar{v}_{j,N}(u)=\bar{\psi}_{j}(u)+T^{\theta_j}\sum\limits_{r=1}^{n}{I^{\theta_j}\big(\bar{p}_{j,r}(u)\bar{v}_{r,N}(u)\big)}+T^{\theta_j}I^{\theta_j}\big(\bar{p}_{j,n+1}(u)\big),\quad j =1,2,\ldots,n.
\end{equation}
Then, following the same approach used to derive \eqref{e37}, we multiply both sides of \eqref{e50a1} by $\mathcal{J}_i^{(0,q-1)}(u)$ and integrate over $I$, which yields
\begin{equation}\label{e50a2}
\big( \bar{v}_{j,N}-\bar{\psi}_{j}-T^{\theta_j}\sum\limits_{r=1}^{n}{I^{\theta_j}\big(\bar{p}_{j,r}\bar{v}_{r,N}\big)}-T^{\theta_j}I^{\theta_j}\big(\bar{p}_{j,n+1}\big),\mathcal{J}_i^{(0,q-1)}\big)=0, \quad i=0,1,\ldots,N.
\end{equation}
Now, multiplying both sides of \eqref{e50a2} by $\{\frac{\mathcal{J}_i^{(0,q-1)}(u)}{\hat{\zeta}_i^{(0,q-1)}}\}_{i=0}^{N}$ and summing up from $i=0$ to $i=N$, one arrives at
\begin{equation*}
\pi_{N}\big( \bar{v}_{j,N}-\bar{\psi}_{j}-T^{\theta_j}\sum\limits_{r=1}^{n}{I^{\theta_j}\big(\bar{p}_{j,r}\bar{v}_{r,N}\big)}-T^{\theta_j}I^{\theta_j}\big(\bar{p}_{j,n+1}\big)\big)=0,
\end{equation*}
which is equivalent to
\begin{equation}\label{e50}
\bar{v}_{j,N}(u)=\bar{\psi}_{j}(u)+T^{\theta_j}\sum\limits_{r=1}^{n}\pi_{N}\big( {I^{\theta_j}\big(\bar{p}_{j,r}\bar{v}_{r,N}\big)}\big)+T^{\theta_j}\pi_{N}\big( I^{\theta_j}\big(\bar{p}_{j,n+1}\big)\big),
\end{equation}
since we have \( \bar{v}_{j,N}(u),\ \bar{\psi}_j(u) \in \operatorname{span}\{ \mathcal{J}_0^{(0,q-1)},\ \mathcal{J}_1^{(0,q-1)},\ \ldots,\ \mathcal{J}_N^{(0,q-1)} \}. \) Subtracting \eqref{e50} from \eqref{e367} yields
\begin{eqnarray*}
\bar{e}_{j,N}(u)=T^{\theta_j}\sum\limits_{r=1}^{n} {I^{\theta_j}\big(\bar{p}_{j,r}(u)\bar{v}_{r}(u)}\big)&-&T^{\theta_j}\sum\limits_{r=1}^{n}\pi_{N}\big( {I^{\theta_j}\big(\bar{p}_{j,r}(u)\bar{v}_{r,N}(u)\big)}\big)\\
&+&T^{\theta_j} I^{\theta_j}\big(\bar{p}_{j,n+1}(u)\big)-T^{\theta_j}\pi_{N}\big( I^{\theta_j}\big(\bar{p}_{j,n+1}(u)\big)\big),\hspace{+2.3cm}
\end{eqnarray*}
which can be rewritten in the following sense
\begin{equation}\label{Eanalysis}
\bar{e}_{j,N}(u)=T^{\theta_j}\sum\limits_{r=1}^{n}{I^{\gamma_{j}}\big(\bar{p}_{j,r}(u)\bar{e}_{r,N}(u)\big)}+\Pi_{j}.
\end{equation}
Equivalently we obtain
\begin{equation*}
\bar{e}_{j,N}(u)=\sum\limits_{r=1}^{n}\Bigg(\int_{0}^{u} k_{j,r}(u,w)\bar{e}_{r,N}(w)dw\Bigg)+\Pi_{j},
\end{equation*}
where $k_{j,r}(u,w)=\frac{T^{\theta_{j}}}{\Gamma(\theta_{j})}(u-w)^{\theta_{j}-1}\bar{p}_{j,r}(u)$, and
\begin{equation*}
    \Pi_{j}=T^{\theta_j}\sum\limits_{r=1}^{n} {e_{\pi_{N}}\Big(I^{\theta_j}\big(\bar{p}_{j,r}(u)\bar{v}_{r,N}(u)}\big)\Big)+T^{\theta_j}e_{\pi_{N}}\Big( I^{\theta_j}\big(\bar{p}_{j,n+1}(u)\big)\Big).
\end{equation*}
Here, $e_{\pi_{N}} (f)=f-\pi_{N}(f)$. Defining the vectors
\begin{align*}
\underline{E}(u)=\big[\bar{e}_{1,N}(u),\bar{e}_{2,N}(u),\ldots,\bar{e}_{n,N}(u)\big]^{T},\quad \quad \underline{\Pi}=\big[ \Pi_{1},\Pi_{2},\ldots,\Pi_{n}\big]^T,
\end{align*}
we can rewrite the equation \eqref{Eanalysis} as the following matrix formulation
\begin{align}\label{eq22}
\underline{E}(u)=\int_{0}^{u}(u-w)^{\min\limits_{1 \le l \le n}\lbrace\theta_{l}\rbrace-1}K(u,w)\underline{E}(w)dw+\underline{\Pi},
\end{align}
where
\begin{eqnarray*}
K(u,w)&=&\bigg((u-w)^{1-\min\limits_{1 \le l \le n} \lbrace \theta_{l}\rbrace}k_{j,r}(u,w)\bigg)_{j,r=1}^{n}\\
\\
&=&\bigg(\frac{T^{\theta_{j}}}{\Gamma(\theta_{j})}(u-w)^{\theta_{j}-\min\limits_{1 \le l \le n} \lbrace \theta_{l}\rbrace}\bar{p}_{j,r}(u)\bigg)_{j,r=1}^{n},
\end{eqnarray*}
is a continuous function on $\{(u,w):~ 0 \leq w \leq u \leq 1\}$. From \eqref{eq22}, it follows that
\begin{align*}
\vert \underline{E}\vert \leq \Psi\int_{0}^{u}(u-w)^{\min\limits_{1 \le l \le n}\lbrace\theta_{l}\rbrace-1}\vert \underline{E}(w)\vert dw+\vert \underline{\Pi}\vert,
\end{align*}
where $\Psi=\max\limits_{0 \leq w \leq u \leq 1}{\vert K(u,w)\vert}< \infty$. Applying Gronwall's inequality, i.e., Lemma 6 of \cite{35} leads to
\begin{align*}
\|\underline{E}\|_{\hat{w}^{(0,0)}} \leq C \left\|\underline{\Pi}\right\|_{\hat{w}^{(0,0)}},
\end{align*}
and consequently
\begin{eqnarray}\label{e54}
\nonumber\Vert\bar{e}_{j,N}(u)\Vert_{\hat{w}^{(0,0)}} &\leq& C \Vert\Pi_{j}\Vert_{\hat{w}^{(0,0)}}\\
\nonumber&\leq& C \Bigg(\sum\limits_{r=1}^{n} {\|e_{\pi_{N}}\Big(I^{\theta_j}\big(\bar{p}_{j,r}(u)\bar{v}_{r,N}(u)\big)\Big)\|_{\hat{w}^{(0,0)}}}
\\
&+&\|e_{\pi_{N}}\Big( I^{\theta_j}\big(\bar{p}_{j,n+1}(u)\big)\Big)\|_{\hat{w}^{(0,0)}}\Bigg).
\end{eqnarray}
Using Theorem \ref{thm2}, we deduce
\begin{eqnarray}\label{e55}
\nonumber\|e_{\pi_{N}}\Big(I^{\theta_j}\big(\bar{p}_{j,r}(u)\bar{v}_{r,N}(u)\big)\Big)\|_{\hat{w}^{(0,0)}}&\leq& C N^{-\epsilon_{jr}} |I^{\theta_{j}}\big(\bar{p}_{j,r}(u)\bar{v}_{r,N}(u)\big)|_{\epsilon_j,\hat{w}^{(0,0)}}\\
\nonumber&\leq& C N^{-\epsilon_{jr}} \Big(|I^{\theta_{j}}\big(\bar{p}_{j,r}(u)\bar{v}_{r}(u)\big)|_{\epsilon_{jr},\hat{w}^{(0,0)}}\\
\nonumber&+&\|D_u\big(D_u^{\epsilon_{jr}} I^{\theta_{j}}\big(\bar{p}_{j,r}(u)\bar{v}_{r}(u)\big)\big)\|_\infty \|\bar{e}_{r,N} \|_{\hat{w}^{(0,0)}}\Big)\\
&\leq& C N^{-\epsilon_{jr}} \Big(|I^{\theta_{j}}\big(\bar{p}_{j,r}(u)\bar{v}_{r}(u)\big)|_{\epsilon_{jr},\hat{w}^{(0,0)}}+\|\bar{e}_{r,N} \|_{\hat{w}^{(0,0)}}\Big),
\end{eqnarray}
where follows from the use of the first assumption together with the first order Taylor formula. Again from Theorem \ref{thm2}, we derive
\begin{eqnarray}\label{e56}
\|e_{\pi_{N}}\Big( I^{\theta_j}\big(\bar{p}_{j,n+1}(u)\big)\Big)\|_{\hat{w}^{(0,0)}}&\leq& C N^{-\rho_j} |I^{\theta_{j}}\big(\bar{p}_{j,n+1}(u)\big)|_{\rho_j,\hat{w}^{(0,0)}}.  
\end{eqnarray}
Inserting \eqref{e55} and \eqref{e56} into \eqref{e54} gives
\begin{equation}\label{e57}
\Vert \bar{e}_{j,N}(u)\Vert_{\hat{w}^{(0,0)}} -C N^{-\epsilon_{jr}} \sum_{r=1}^{n}\| \bar{e}_{r,N}(u)\|_{\hat{w}^{(0,0)}}\leq C H_{j},
\end{equation}
in which
\begin{equation*}
H_{j}=N^{-\epsilon_{jr}}|I^{\theta_{j}}\big(\bar{p}_{j,r}(u)\bar{v}_{r}(u)\big)|_{\epsilon_{jr},\hat{w}^{(0,0)}}+N^{-\rho_{j}}|I^{\theta_{j}}\big(\bar{p}_{j,n+1}(u)\big)|_{\rho_j,\hat{w}^{(0,0)}}.
\end{equation*}
It is straightforward to rewrite \eqref{e57} in the vector-matrix form
\begin{equation}\label{e58}
  G \underline{e} \leq C \underline{H},
\end{equation}
where
\begin{eqnarray*}
\underline{e}&=&[\|\bar{e}_{1,N}\|_{\hat{w}^{(0,0)}}, \|\bar{e}_{2,N}\|_{\hat{w}^{(0,0)}},...,\|\bar{e}_{n,N}\|_{\hat{w}^{(0,0)}}]^{T},\\
\underline{H}&=&[H_1, H_2, ..., H_n]^{T},
\end{eqnarray*}
and $G$ is a matrix of order $n$ with the entries
\[
\big(G\big)_{j,r=1}^{n}=\begin{cases}
1-CN^{-\epsilon_{jj}},\quad r=j,\\
-CN^{-\epsilon_{jr}},\quad \quad r \neq j.
\end{cases}
\]
It follows that, for sufficiently large $N$, the matrix $G$ converges to the identity matrix. Therefore, the inequality \eqref{e58} yields
\[
\|\bar{e}_{j,N}\|_{\hat{w}^{(0,0)}} \leq C H_{j},
\]
which proves the desired result.
\end{proof}
\begin{remark}
It should be emphasized that $D_u^k$ is not the classical derivative operator; rather, it is the derivative defined in \eqref{Newd} for the transformed space $H_{\hat{w}^{(0,0)}}^k$. Under the assumptions of Theorem \ref{th202}, the solutions $\bar{v}_{j}(u)$ are analytic in powers of $u^{\eta}$, which implies that $\{\epsilon_{jr}\}_{j=1}^{n} = \{\rho_{j}\}_{j=1}^{n} = \{\infty\}_{j=1}^{n}$. Therefore, the proposed method is expected to achieve spectral accuracy in this setting. Although the algorithm can also be applied to equations that do not fully satisfy these assumptions, some conclusions may no longer hold. For all the test problems considered (see Section \ref{sec5}), the assumptions of Theorem \ref{th202} are indeed satisfied.
\end{remark}
\section{Numerical examples}\label{sec5}
In this section, we present numerical solutions for several SFDEs to demonstrate the accuracy and efficiency of the proposed scheme. The section is structured as follows.

\begin{itemize}
\item The numerical errors $E(N)$ and CPU time\footnote{We used the \texttt{Timing} function in Mathematica to report CPU time.} used are computed to evaluate the performance of the suggested method. For all examples, these errors are measured using the $L^{2}(\Lambda)$-norm
    \[
    E(N)=\max\limits_{j=1,2,\ldots,n}\|e_{j,N}(t)\|, \quad \quad \|e_{j,N}\|^{2}\simeq\frac{T}{2}\sum_{k=0}^{N}\bigg(e_{j,N}\big(T/2(x_{k}^{(0,0)}+1)\big)\bigg)^{2}w_{k}^{(0,0)},\]
    where $e_{j,N}(t)=v_{j}(t)-v_{j,N}(t)$, and $\{(x_{k}^{(0,0)},w_{k}^{(0,0)})\}_{k=0}^{N}$ denote the Legendre-Gauss quadrature node-weight pairs \cite{29}.
\item To demonstrate the capability of our implementation in handling highly oscillatory solutions of SFDEs, we compare its performance with existing methods from the literature.

\item  To assess the stability of the proposed approach, we examine its performance for large values of \(N\) and for problems defined over an extended integration domain \(\Lambda\).
\end{itemize}
All the calculations were executed using Mathematica v13.3, running on an Intel (R) Core (TM) i7-8665U CPU @ 1.90 GHz with 16 GB of RAM.
\begin{example}\label{exm1}
Consider the highly oscillatory linear SFDEs 
\begin{align*}
\begin{cases}
D^{\theta_{1}}_{C} v_1(t)= t^{1/2} v_1(t)+ v_2(t) +\frac{1}{2} J_{0}(t^{\frac{5}{4}}) v_3(t) + p_{1,4}(t), \\
D^{\theta_{2}}_{C} v_2(t)= v_{1}(t)+t v_2(t) + 2t^{\frac{3}{2}} v_{3}(t) + p_{2,4}(t), \\
D^{\theta_{3}}_{C} v_{3}(t)= \sin{(2t^{\frac{1}{2}})}v_{1}(t)+3v_{2}(t)+ t v_{3}(t) + p_{3,4}(t),\\
v_1(0)=0,\quad v_2(0)=v_{3}(0)=1, \quad  t \in [0,\frac{\pi}{2}],
\end{cases}
\end{align*}
where \( J_{c}(t) \) denotes the Bessel function for the integer \( c \), \( \theta_{1} = \frac{1}{4} \), \( \theta_{2} = \frac{1}{2} \), and \( \theta_{3} = \frac{3}{4} \), and the source functions \( p_{j,4}(t) \), \( j = 1, 2, 3 \), are chosen such that the exact solutions are given by
\begin{align*}
v_1(t)=\sin{(70t^{\frac{1}{4}})},\quad v_2(t)=\cos{(70t^{\frac{1}{2}})},\quad v_3(t)=\sin{(70t^{\frac{3}{4}})}+\cos{(12t^{\frac{3}{4}})}.
\end{align*}
\end{example}

This problem exhibits a highly oscillatory solution in which $v_{1}(t)$ and $v_{3}(t)$ are not infinitely smooth, and their first derivatives are discontinuous at $t = 0$. Such behavior makes the problem challenging, as its numerical approximation may become unstable for large $N$. We solve this problem using the proposed method, and the obtained results are presented in Figures \ref{fg4550}, \ref{fg2500}, and Table \ref{tab_cpu1}.
\begin{figure}[!ht]
\centering
\begin{minipage}{0.48\textwidth}
    \centering
    \includegraphics[width=\linewidth]{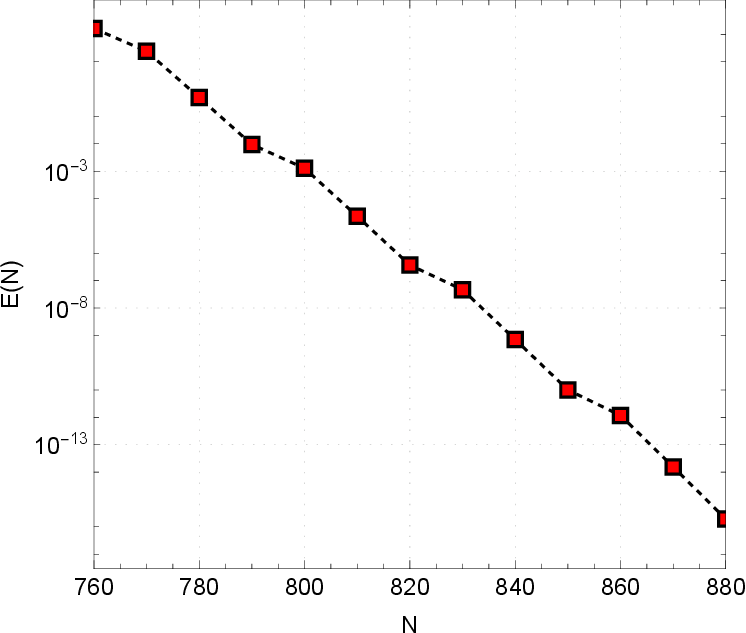}
    \caption{Semi-logarithmic plot of the numerical errors versus $N$ for Example \ref{exm1}.}
    \label{fg4550}
\end{minipage}\hfill
\begin{minipage}{0.48\textwidth}
    \centering
   
    \vspace{3.7cm}
      \renewcommand{\arraystretch}{1.3}
    \begin{tabular}{c|c}
        $N$ & CPU Time (s) \\
        \hline
        110 & 3.34 \\
        220 & $1.58\times 10^{+1}$ \\
        440 & $6.30\times 10^{+1}$ \\
        880 & $5.41\times 10^{+2}$\\
    \end{tabular}
    \captionof{table}{Elapsed CPU time versus $N$ for Example \ref{exm1}.}
    \label{tab_cpu1}
\end{minipage}
\end{figure}
\begin{figure}[!ht]
\centerline{\includegraphics[width=8cm]{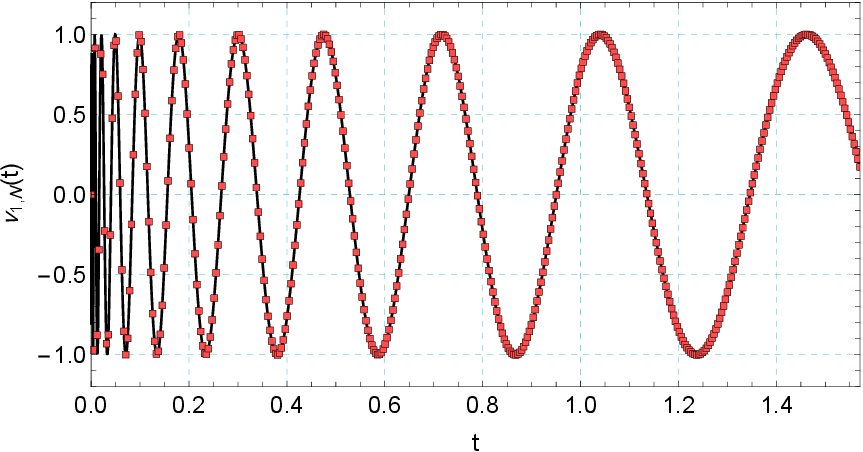},\includegraphics[width=8cm]{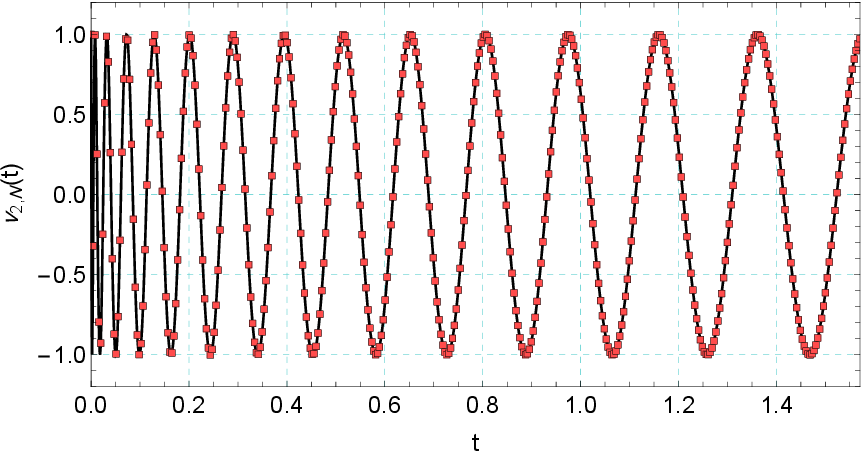}}\centerline{\includegraphics[width=8cm]{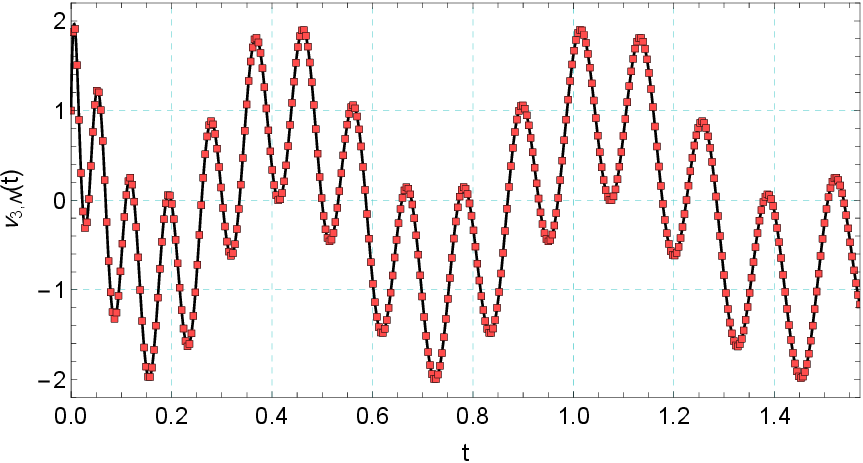}}
\caption{Plots comparing the approximate solutions at $N=1000$ (red squares) and the exact solutions (solid lines) for Example \ref{exm1}.}\label{fg2500}
\end{figure}

 From Figure \ref{fg4550}, we can deduce that the method is converging for $N > 750$, and due to its stability, the error consistently decreases as $N$ increases. Moreover, the semi-log plot of the errors versus $N$ shows a nearly linear trend, indicating an exponential rate of convergence (note that we have
$\{\epsilon_{jr}\}_{j=1}^{3}=\{\rho_{j}\}_{j=1}^{3}=\{\infty\}_{j=1}^{n}$ in Theorem \ref{thm4}). Figure \ref{fg2500} shows that the approximations exhibit excellent agreement with the exact solution curves, demonstrating the high accuracy of the method. The results validate the method’s ability to handle challenging problems with oscillatory behavior and discontinuous derivatives, maintaining both accuracy and stability.
\begin{example}\label{exm3}
Consider the highly oscillatory linear SFDEs 
\begin{align*}
\begin{cases}
D^{\theta_{1}}_{C} v_1(t)= t^{5/2} v_1(t)+ v_2(t) + p_{1,3}(t), \\
D^{\theta_{2}}_{C} v_2(t)= v_{1}(t)+\cos{(t^{3/2})} v_2(t) + p_{2,3}(t),\\
v_1(0)=0,\quad v_2(0)=1, \quad v_2^{(1)}(0)=1, \quad \Lambda= [0,\frac{3\pi}{2}].
\end{cases}
\end{align*}
Here we take \( \theta_{1} = \frac{1}{2} \), \( \theta_{2} = \frac{3}{2} \), and the functions \( p_{j,3}(t) \), \( j = 1, 2 \), are selected so that the problem admits the following exact solutions
\begin{align*}
v_1(t)=t^{\frac{1}{2}} e^{i80t^{\frac{1}{2}}},\quad v_2(t)=e^{i10t^{\frac{3}{2}}}.
\end{align*}
\end{example}
The problem is solved using the proposed method, and the corresponding results are shown in Figure \ref{fg31}, Figure \ref{fg3}, and Table \ref{tab_cpu2}.
\begin{figure}[!ht]
\centering
\begin{minipage}{0.48\textwidth}
    \centering
    \includegraphics[width=\linewidth]{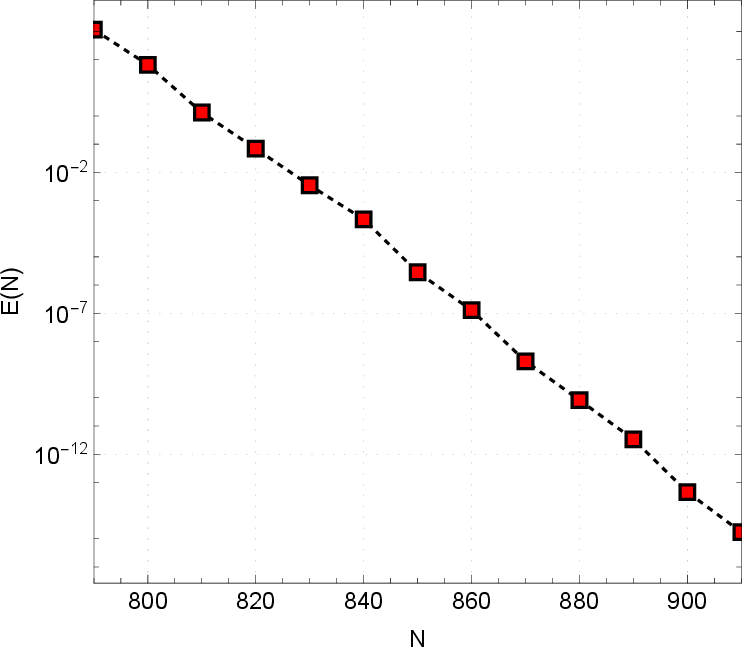}
    \caption{Semi-logarithmic plot of the numerical errors versus $N$ for Example \ref{exm3}.}
    \label{fg31}
\end{minipage}\hfill
\begin{minipage}{0.48\textwidth}
    \centering

    \vspace{3.75cm} 
      \renewcommand{\arraystretch}{1.3}
    \begin{tabular}{c|c}
        $N$ & CPU Time (s) \\
        \hline
        125 & 2.78 \\
        250 & $1.61\times 10^{+1}$ \\
        500 & $5.61\times 10^{+1}$ \\
        1000 & $4.32\times 10^{+2}$\\
    \end{tabular}
    \captionof{table}{Elapsed CPU time versus $N$ for Example \ref{exm3}.}
    \label{tab_cpu2}
\end{minipage}
\end{figure}
\begin{figure}[!ht]
\centerline{\includegraphics[width=8cm]{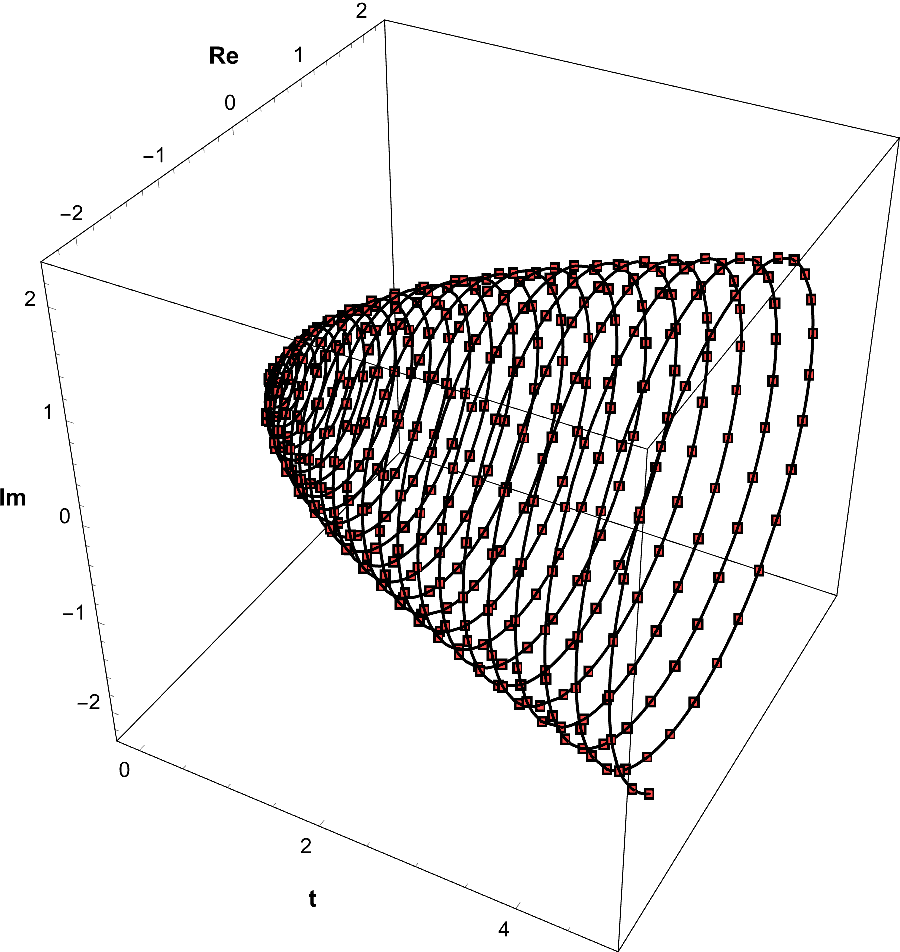},\includegraphics[width=8cm]{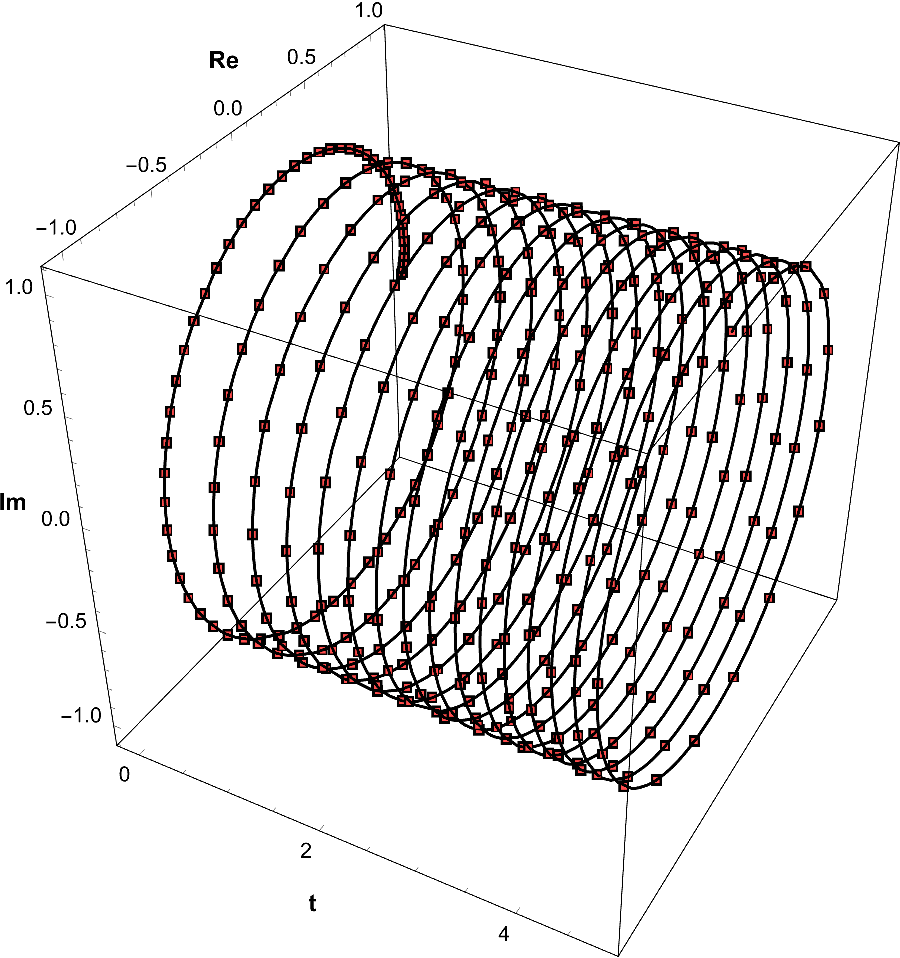}}
\caption{Plots comparing the approximate solutions at $N=1200$ (red squares) and the exact solutions (solid lines) for Example \ref{exm3}.}\label{fg3}
\end{figure}

The forcing functions $p_{j,3}(t)$, $j=1,2$, are found to have the following asymptotic behaviors
\begin{eqnarray*}
    p_{1,3}(t)&=&-(1.14 \times 10^{-1})+(9.03 \times 10^{1} i)t^{1/2}-(4.25 \times 10^{3}) t-(1.28 \times 10^{5} i) t^{3/2}+\cdots,\\
    p_{1,3}(t)&=&-(1.00-1.33 \times 10^{1} i)-t^{1/2}+(8.00 \times 10^{1}i) t+(2.97 \times 10^{3}-1.00 \times 10^{1}i) t^{3/2}+\cdots.
\end{eqnarray*}
Consequently, $v_{j}(s^{q})$, and $q_{j,r} (s^{q})$ for $q=2$ are analytic functions and thereby we have $\{\epsilon_{jr}\}_{j=1}^{2}=\{\rho_{j}\}_{j=1}^{2}=\{\infty\}_{j=1}^{n}$ in Theorem \ref{thm4}. This establishes the exponential-type convergence of the approximate solutions, as illustrated in Figure \ref{fg31}, while the semi-logarithmic plot of the numerical errors exhibits an almost linear dependence on $N$. Moreover, Figure \ref{fg3} shows that the approximate solutions coincide closely with the exact solutions at the selected nodes, confirming the reliability and high-order accuracy of the approximations.

\begin{example}\label{exm2}
Consider the following SFDEs
\begin{align}\label{eqq45}
\begin{cases}
D^{\theta_{1}}_{C} v_1(t)= 2t v_1(t)+t^{\frac{1}{3}} v_2(t) + \sin{(2t^{\frac{1}{6}})} v_3(t) + p_{1,4}(t), \\
D^{\theta_{2}}_{C} v_2(t)=t^{\frac{11}{6}} v_{1}(t)+t^{\frac{1}{2}} v_2(t) + 5 v_{3}(t) + p_{2,4}(t), \\
D^{\theta_{3}}_{C} v_{3}(t)= tv_{1}(t)+v_{2}(t)+ \cos{(t^{\frac{2}{3}})} v_{3}(t) + p_{3,4}(t),\\
v_1(0)= v_2(0)=v_{3}(0)=0, \quad  \Lambda=  [0,1],
\end{cases}
\end{align}
with the non-smooth solutions
\begin{align*}
v_1(t)=\sin{(10t^{\frac{1}{6}})},\quad v_2(t)=t^{\frac{1}{3}},\quad v_3(t)=t^{\frac{2}{3}}+5t^{\frac{5}{6}}.
\end{align*}
Given the exact solutions above, the forcing functions $p_{j,4}(t)$, $j=1,2,3$, are determined so that these solutions satisfy \eqref{eqq45}.
\end{example}
We solve this problem using our proposed method and the fractional collocation method introduced by Faghih and Mokhtary \cite{36}, and the corresponding semi-logarithmic plots of the errors are presented in Figure \ref{fg2}. Table \ref{tab_cpu3} reports the elapsed CPU times for both methods.
\begin{figure}[!ht]
\centering
\begin{minipage}{0.48\textwidth}
    \centering
    \includegraphics[width=\linewidth]{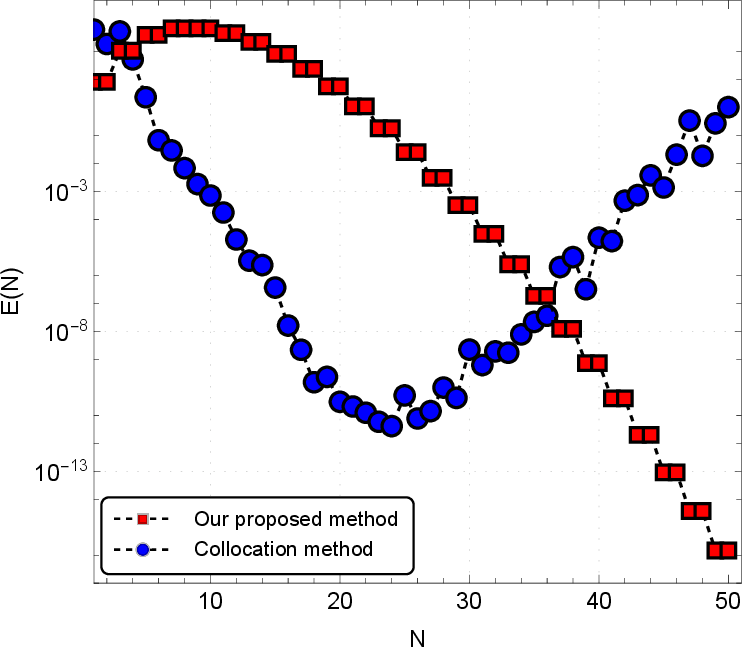}
    \caption{Semi-logarithmic plot of the numerical errors versus $N$ for Example \ref{exm2}.}
    \label{fg2}
\end{minipage}\hfill
\begin{minipage}{0.48\textwidth}
    \centering
    \vspace{2.8cm} 
    \renewcommand{\arraystretch}{1.3}
    \setlength{\tabcolsep}{4pt}
    \begin{tabular}{c|c|c}
        \multirow{2}{*}{$N$} & \multicolumn{2}{c}{CPU Time (s)} \\
        \cline{2-3}
         & Proposed method & Collocation method \\
        \hline
        10  & $1.09\times 10^{-1}$           & $6.25\times 10^{-1}$\\
        20  & $1.41\times 10^{-1}$ & 1.88 \\
        40  & $2.81\times 10^{-1}$ & $2.16\times 10^{+1}$ \\
        80 & $8.13\times 10^{-1}$ & $1.95\times 10^{+2}$ \\
    \end{tabular}
    \captionof{table}{Comparison of elapsed CPU times between our proposed method and the collocation method in \cite{36} for Example \ref{exm2}.}
    \label{tab_cpu3}
\end{minipage}
\end{figure}

In collocation methods, the residual equation is evaluated at a selected set of collocation nodes, leading to a linear system of algebraic equations that, in most cases, has a dense coefficient matrix \cite{36}. Solving such systems can result in reduced accuracy as shown in Figure \ref{fg2}, particularly for oscillatory solutions, where achieving high accuracy often requires large approximation degrees. From Figure \ref{fg2}, we observe that although the collocation method converges more rapidly than our proposed method for small $N$, the error reaches a lower bound around $10^{-11}$, beyond which increasing $N$ no longer improves accuracy. Higher approximation degrees lead to greater accumulation of rounding errors and a more ill-conditioned dense algebraic system (e.g., for $N=30$, the condition number of the resulting algebraic system is around $10^7$). Consequently, for $N \geq 25$, the error grows sharply.

In contrast, the proposed M\"{u}ntz–Jacobi Galerkin algorithm yields significantly better approximations with considerably shorter CPU times. A regular and steady decrease in the error is observed, demonstrating the advantage of our approach in producing high-accuracy approximations for oscillatory and less-smooth solutions of SFDEs. This advantage partly arises from computing the approximate solutions via recurrence formulas \eqref{Recurence}, which avoids the formation and solution of large, and potentially ill-conditioned algebraic systems.
\\

In the following, we compare our method with the central part interpolation approach \cite{Lillemae2025} and the hybrid non-polynomial collocation method \cite{Ferras2020}.
\begin{example}\label{exm5}
     \cite[Example 2]{Ferras2020} Let us consider the linear problem
\begin{equation*}
\begin{cases}
D^{\theta}_Cv_{1}(t)=v_{2}(t),\\
D^{\theta}_Cv_{2}(t)=-v_{1}(t)-v_{2}(t)+t^{\theta+1}+\frac{\pi \csc(\pi \theta)t^{1-\theta}}{\Gamma(-\theta-1)\Gamma(2-\theta)}+\frac{\pi t \csc(\pi \theta)}{\Gamma(-\theta-1)},\\
v_{1}(0)=0, \quad v_{2}(0)=0, \quad \Lambda=[0,1],
\end{cases}
\end{equation*}
along with the exact solutions
\begin{align*}
v_1(t)=t^{1+\theta},\quad v_2(t)=\frac{\pi \theta (\theta+1) \csc(\pi \theta)}{\Gamma(1-\theta)}t.
\end{align*}
\end{example}
We solve this problem using the proposed scheme for $\theta = \tfrac{1}{4}, \tfrac{2}{5}, \tfrac{1}{2}, \tfrac{2}{3}$, obtaining exact solutions up to machine precision with approximation degrees $N = 5, 7, 3, 5$, respectively, in a very short CPU time (less than $5.0\times 10^{-2}$ (s)). This problem was addressed by Ferrás et al. \cite{Ferras2020} using a hybrid collocation approach for derivative values $\theta = \tfrac{1}{4}, \tfrac{1}{2}, \tfrac{2}{3}$. This method involves splitting the domain $\Lambda$ into $m$ subintervals, representing the approximate solutions as a linear combination of non-polynomial functions near the origin, and using polynomials in the remainder of the domain. For the case $\theta = \tfrac{2}{5}$, Lillemäe et al. \cite{Lillemae2025} proposed a collocation method based on central part interpolation, utilizing continuous piecewise polynomials on a uniform grid. This method involves constructing separate Lagrange interpolation polynomials of degree $N$ for each subinterval, interpolating the solution at $N+1$ grid points surrounding the subinterval. The numerical results of both methods are summarized in Table \ref{tab5} for different values of $m$. As observed, our proposed method achieves higher accuracy compared with both methods presented in \cite{Ferras2020} and \cite{Lillemae2025}.
\begin{table}[h]
\renewcommand{\arraystretch}{1.3}
  \centering
  \caption{Computed errors for Example \ref{exm5} with $m$ subintervals: hybrid collocation \cite{Ferras2020} ($N=4$, $\theta = \tfrac{1}{4}, \tfrac{1}{2}, \tfrac{2}{3}$) and central part interpolation \cite{Lillemae2025} ($N=4$, $\theta = \tfrac{2}{5}$).}\label{tab5}
  \begin{tabular}{|c||c||c|c||c|c||c|c|}
  \hline
  \multicolumn{6}{|c||}{Hybrid collocation \cite{Ferras2020}} & \multicolumn{2}{|c|}{Central part interpolation \cite{Lillemae2025}} \\
  \hline
  \multicolumn{2}{|c||}{$\theta=\tfrac{1}{4}$} & \multicolumn{2}{|c||}{$\theta=\tfrac{1}{2}$} & \multicolumn{2}{|c||}{$\theta=\tfrac{2}{3}$} & \multicolumn{2}{|c|}{$\theta=\tfrac{2}{5}$} \\
  \hline
   $m$ & $\text{max error}$ & $m$ & $\text{max error}$ & $m$ & $\text{max error}$ & $m$ & $\text{max error}$ \\
  \hline
15 & $2.11\times 10^{-6}$ & 26 & $1.94\times 10^{-7}$ & 32 & $5.53\times 10^{-8}$ & 64 & $6.07\times 10^{-6}$ \\
34 & $1.45\times 10^{-7}$ & 59 & $1.27\times 10^{-8}$ & 71 & $3.58\times 10^{-9}$ & 128 & $2.24\times 10^{-7}$ \\
75 & $9.43\times 10^{-9}$ & 128 & $8.13\times 10^{-10}$ & 155 & $2.28\times 10^{-10}$ & 256 & $7.61\times 10^{-9}$ \\
166 & $6.00\times 10^{-10}$ & 278 & $5.14\times 10^{-11}$ & 331 & $1.44\times 10^{-11}$ & 512 & $2.47\times 10^{-10}$ \\
  \hline
  \end{tabular}
\end{table}
\\

Brugnano et al. \cite{BrugnanoGurioliIavernaro2025,BrugnanoGurioliIavernaroVikerpuur2025,BrugnanoGurioliIavernaroVikerpuur2025FDE} made a significant contribution to the study of initial value FDEs by developing a class of Runge–Kutta type methods, called fractional Hamiltonian boundary value methods (FHBVMs), along with providing the corresponding Matlab codes available online\footnote{\url{https://people.dimai.unifi.it/brugnano/fhbvm/}}. In the following two examples, we make a comparison between the M\"untz-Jacobi Galerkin method and the approaches presented in \cite{BrugnanoGurioliIavernaroVikerpuur2025FDE}.
\begin{example}\label{exm6}
     \cite[Problem 1]{BrugnanoGurioliIavernaroVikerpuur2025FDE} Given the following linear FDE
\begin{equation*}
\begin{cases}
D^{\theta}_Cv_{1}(t)=-v_{1}(t)+1,\\
v_{1}(0)=10, \quad \Lambda=[0,10^{3}],
\end{cases}
\end{equation*}
with the exact solution
\begin{align*}
v_1(t)=9 E_{\frac{1}{2}}(-t^{\frac{1}{2}})+1,
\end{align*}
where $E_{c}(\cdot)$ denotes the one-parameter Mittag-Leffler function, we employ our presented method to solve this equation, and the numerical results are shown in Figure \ref{fg151} and Table \ref{tab_cpu}.
\end{example}
\begin{figure}[!ht]
\centering
\begin{minipage}{0.48\textwidth}
    \centering
    \includegraphics[width=\linewidth]{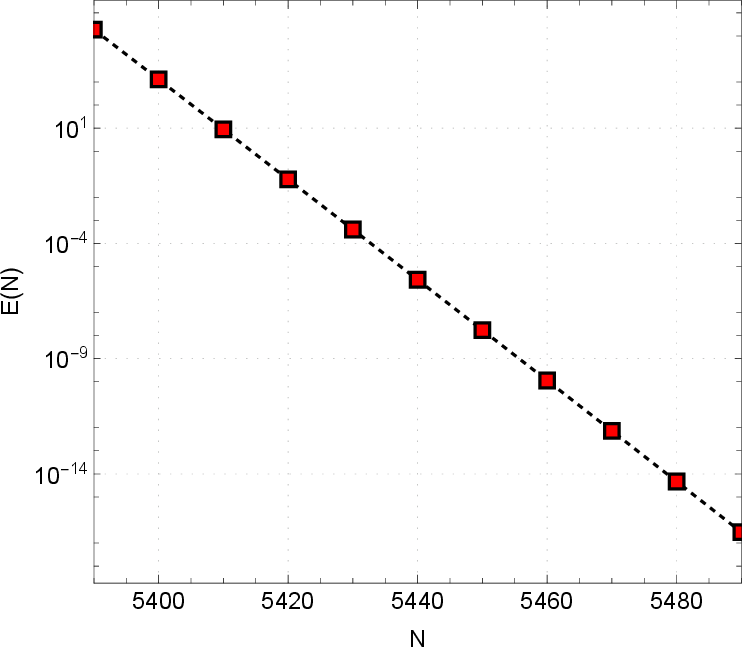}
    \caption{Semi-logarithmic plot of the numerical errors versus $N$ for Example \ref{exm6}.}
    \label{fg151}
\end{minipage}\hfill
\begin{minipage}{0.48\textwidth}
    \centering
    
    \vspace{3.75cm} 
      \renewcommand{\arraystretch}{1.3}
    \begin{tabular}{c|c}
        $N$ & CPU Time (s) \\
        \hline
        685 & 1.41 \\
        1370 & 6.06 \\
        2740 & $2.67\times 10^{+1}$ \\
        5480 & $9.18\times 10^{+1}$\\
    \end{tabular}
    \captionof{table}{Elapsed CPU time versus $N$ for Example \ref{exm6}.}
    \label{tab_cpu}
\end{minipage}
\end{figure}
To assess the performance of the proposed method, we compare it with the approaches introduced by Brugnano et al. \cite{BrugnanoGurioliIavernaroVikerpuur2025FDE}. In this work, each approach is addressed by the name of the corresponding Matlab code. A summary of the numerical results obtained with these methods is reported in Table \ref{tab:accuracy_time}. For further details, in particular the plot of the execution time versus accuracy, we refer the reader to \cite[Figure 11]{BrugnanoGurioliIavernaroVikerpuur2025FDE}.
\begin{table}[!ht]
\renewcommand{\arraystretch}{1.3}
\centering
\begin{tabular}{|c||c||c|}
\hline
Method (Code) & Digits of Accuracy & CPU Time (s) \\
\hline
Predictor–corrector (\texttt{fde12}) \cite{Garrappa2010} & 6 & $5.0\times 10^{+2}$ \\
Fractional linear multi-step (\texttt{flmm2}) \cite{Garrappa2015}  & 10 & $7.0\times 10^{+1}$ \\
Spline collocation (\texttt{fcoll}) \cite{Cardone2021} & 12 & 0.50 \\
Two-step Spline collocation (\texttt{tsfcoll}) \cite{Cardone2021b} & 14 & 6\\
FHBVM (\texttt{fhbvm}) \cite{BrugnanoGurioliIavernaro2025,BrugnanoGurioliIavernaroVikerpuur2025} & 14 & $5.0\times 10^{-2}$ \\
\hline
\end{tabular}
\caption{Numerical results obtained through the different methods presented in \cite{BrugnanoGurioliIavernaroVikerpuur2025FDE} for Example \ref{exm6}.}
\label{tab:accuracy_time}
\end{table}

Using the M\"untz-Jacobi Galerkin method, we achieve 15 digits of accuracy in less than $8.0 \times 10^{+1}$ (s) of CPU time, which is the highest among all methods reported in Table~\ref{tab:accuracy_time}. Although the execution time is shorter than that of the predictor–corrector method (\texttt{fde12}), it is longer than the other approaches, while delivering the most accurate results.

It should be noted that the proposed method remains stable and convergent even for problems defined on very long domains, as in this example, or for problems with very highly oscillatory solutions (see Example \ref{exm7}). However, achieving small errors in these cases requires very large approximation degrees. For instance, in this example, an error of order $10^{-16}$ is reached only for $N=5480$. This inevitably leads to increased computational cost and longer CPU times, indicating that such problems pose greater numerical challenges for the method.
\begin{example}\label{exm7}
    \cite[Problem 4]{BrugnanoGurioliIavernaroVikerpuur2025FDE} As the last problem we consider a stiffly oscillatory problem
\begin{align*}
\begin{cases}
D^{\frac{1}{2}}_{C} \bold{V}(t)= A \bold{V}(t), \\
\bold{V}(0)= \bold{V}_{0}, \quad  \Lambda = [0,T],
\end{cases}
\end{align*}
where
\begin{align*}
A= \frac{1}{8}
\begin{bmatrix}
41  & 41&  -38 & 40 &  -2 \\
-79  &   81 & 2 & 0 &   -2 \\
20 &  -60 &   20 & -20 & -8 \\
  -22 & 58 &  -24 &  20 &  -4 \\
 1 &  1 &  -2 &  -4 &    -2
\end{bmatrix}, \quad
\bold{V}(t)=\big[v_{j}(t)\big]_{j=1}^{5}, \quad
\bold{V}_{0}=
\begin{bmatrix}
1 \\ 2 \\ 3 \\ 4 \\ 5
\end{bmatrix}.
\end{align*}
The exact solution is given by $\bold{V}(t)=E_{\frac{1}{2}}(At^{\frac{1}{2}})\bold{V}_{0}$.
\end{example}
The problem is solved via the suggested method for $T=2$, and the corresponding results are depicted in Figures \ref{fg7600}, \ref{fg7500}, and Table \ref{cpu5}.
\begin{figure}[!ht]
\centering
\begin{minipage}{0.48\textwidth}
    \centering
    \includegraphics[width=\linewidth]{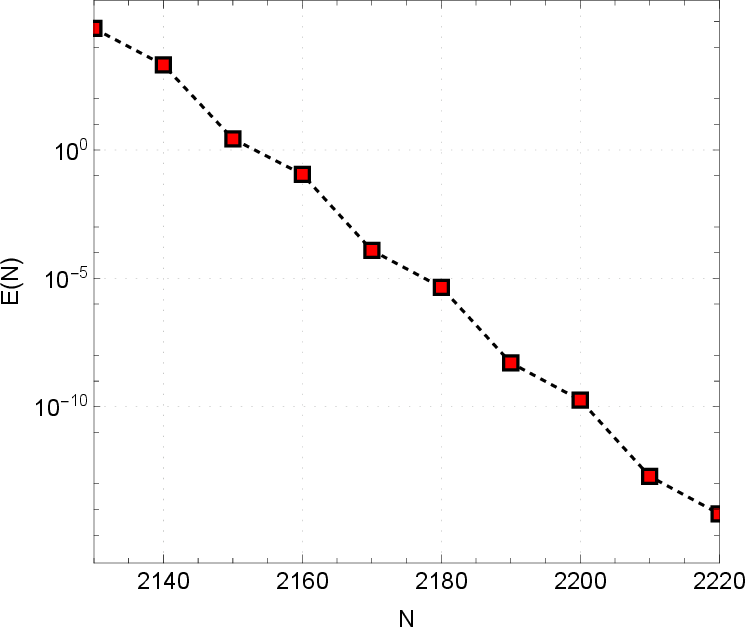}
    \caption{Semi-logarithmic plot of the numerical errors versus $N$ for Example \ref{exm7}.}
    \label{fg7600}
\end{minipage}\hfill
\begin{minipage}{0.48\textwidth}
    \centering
    
    \vspace{3.5cm} 
      \renewcommand{\arraystretch}{1.3}
    \begin{tabular}{c|c}
        $N$ & CPU Time (s) \\
        \hline
        280 & 3.77 \\
        560 & $1.70\times 10^{+1}$ \\
        1120 & $6.84\times 10^{+1}$ \\
        2240 & $2.80\times 10^{+2}$\\
    \end{tabular}
    \captionof{table}{Elapsed CPU time versus $N$ for Example \ref{exm7}.}
    \label{cpu5}
\end{minipage}
\end{figure}
\begin{figure}[!ht]
\centerline{\includegraphics[width=8cm]{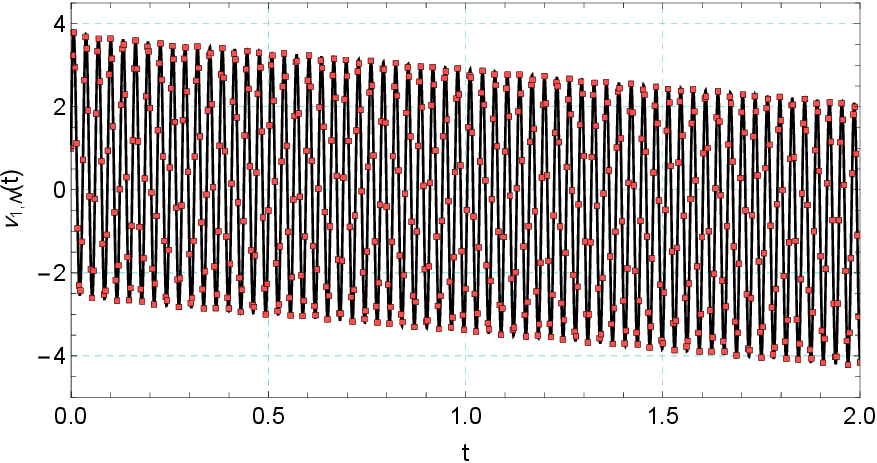},\includegraphics[width=8cm]{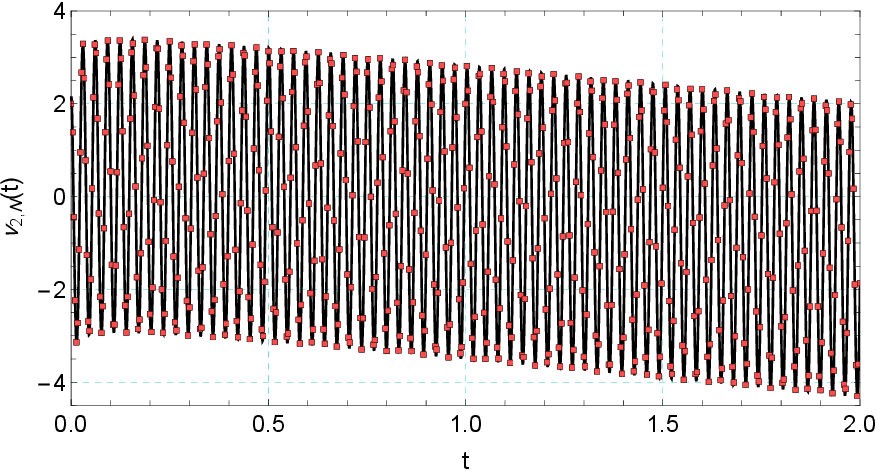}}\centerline{\includegraphics[width=8cm]{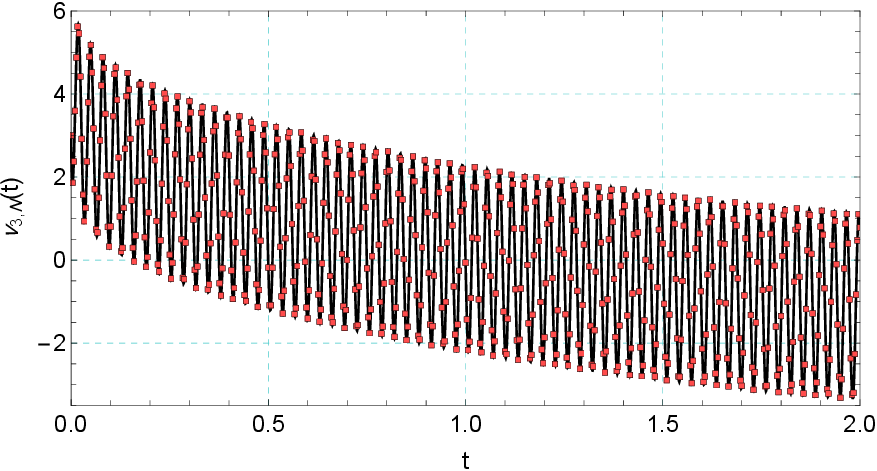},\includegraphics[width=8cm]{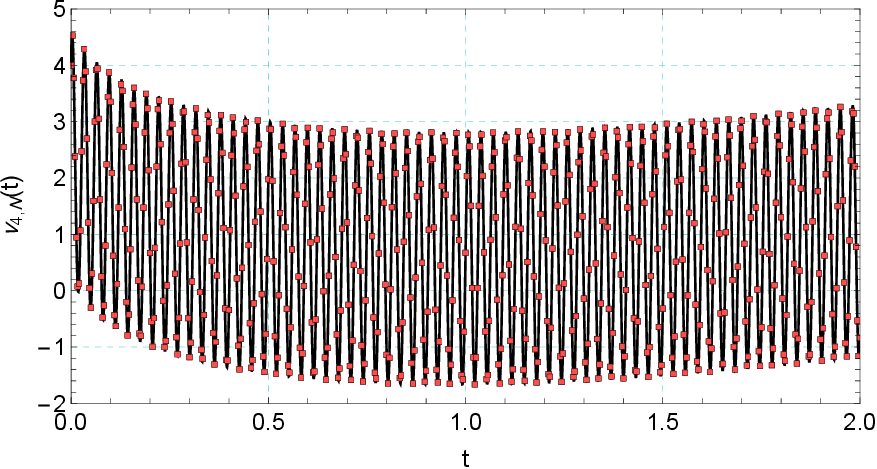}}\centerline{\includegraphics[width=8cm]{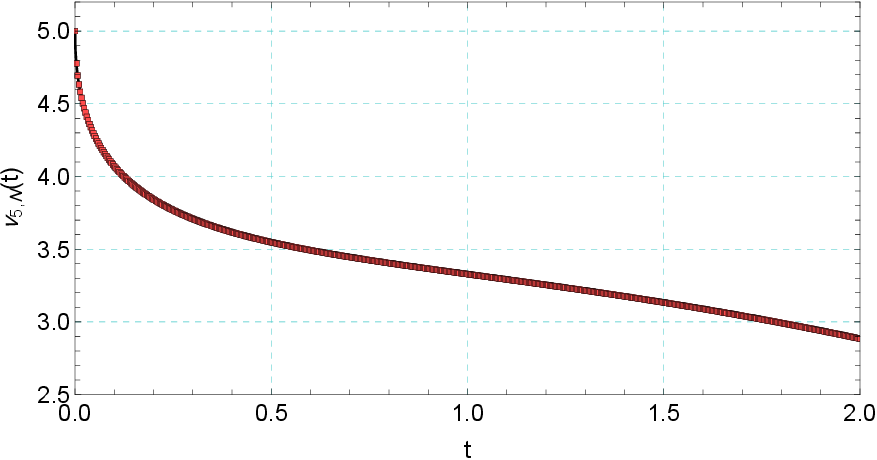}}
\caption{Plots comparing the approximate solutions at $N=2300$ (red squares) and the exact solutions (solid lines) for Example \ref{exm7}.}\label{fg7500}
\end{figure}

Brugnano et al. \cite{BrugnanoGurioliIavernaroVikerpuur2025FDE} addressed this problem using different methods for $T=20$ (see \cite[Figure 14]{BrugnanoGurioliIavernaroVikerpuur2025FDE}), and Table \ref{comp7} summarizes the corresponding results. We were unable to run the example for $T = 20$ via our proposed method on the same computer used for the other experiments, as the longer interval introduces many more oscillations and the problem itself is stiff, exhibiting both fast and slow oscillatory modes. This requires a significantly higher approximation degree to achieve small errors, which in turn, substantially increases the computational cost and necessitates a higher-performance computer. Nevertheless, from the results obtained in both the previous and the current example, it is evident that the FHBVM method is consistently faster than all other methods, while our method achieves higher accuracy. However, the computational cost of our approach becomes considerably larger when solving problems over very long domains with pronounced oscillations. We note that this comparison may not be entirely conclusive, as the other methods were implemented in MATLAB while our method was implemented in Mathematica, so differences in software performance could affect the observed computation times.
\begin{table}[!ht]
\renewcommand{\arraystretch}{1.3}
\centering
\begin{tabular}{|c||c||c|}
\hline
Method (Code) & Digits of Accuracy & CPU Time (s) \\
\hline
Predictor–corrector (\texttt{fde12}) \cite{Garrappa2010} & 3 & $2.50\times 10^{+3}$ \\
Fractional linear multi-step (\texttt{flmm2}) \cite{Garrappa2015}  & 2 & $7.0\times 10^{+2}$ \\
FHBVM (\texttt{fhbvm}) \cite{BrugnanoGurioliIavernaro2025,BrugnanoGurioliIavernaroVikerpuur2025} & 10 & 1.5\\
\hline
\end{tabular}
\caption{Numerical results obtained through the different methods presented in \cite{BrugnanoGurioliIavernaroVikerpuur2025FDE} for Example \ref{exm7}.}
\label{comp7}
\end{table}
\section{Conclusion}
The main purpose of this paper was to design a high-order spectral method to tackle the highly oscillatory and non-smooth solutions of a class of linear SFDEs. We provided a theorem to identify the conditions under which the solution is highly oscillatory and to extract the regularity properties of the exact solutions. Based on this information, we selected the best choice of basis functions with the same asymptotic behavior as the exact solutions. The M\"untz-Jacobi Galerkin method was implemented so that all unknowns were computed through recurrence relations, without the need to solve any potentially ill-conditioned algebraic systems. The choice of basis functions and implementation, together with the provided error analysis, confirmed that the method achieved spectral accuracy under the assumptions of the regularity theorem. 

Although an error analysis was provided in this paper, future research will focus on a thorough investigation of the stability properties of the proposed method.

\section*{Funding}
The research was partially supported by the Research Council KU Leuven (Belgium), 
through project C16/21/002 (Manifactor: Factor Analysis for Maps into Manifolds) and by the Fund for Scientific Research -- Flanders (Belgium), projects G0A9923N (Low rank tensor approximation techniques for up- and downdating of massive online time series clustering) and G0B0123N (Short recurrence relations for rational Krylov and orthogonal rational functions inspired by modified moments).
\section*{Acknowledgements}
The author would like to thank Marc Van Barel and Raf Vandebril for carefully reading a draft of this paper and for their valuable suggestions.
\section*{Declarations}
\textbf{Conflict of interest:} The author declares that there is no conflict of interest nor competing interests.
\bibliographystyle{siam}
\bibliography{references}
\end{document}